   \numberwithin{equation}{section}
\titlespacing{\subsubsection}{0pt}{3ex plus 0.1ex minus .2ex}{1ex plus .2ex}
\journal{``Journal of Pseudo-Differential Operators and Applications"} 
\newtheorem{thm}{Theorem}[section]
\newtheorem{lem}[thm]{Lemma}
\newtheorem{defn}[thm]{Definition}
\newtheorem{rem}[thm]{Remark}
\newtheorem{exam}[thm]{Example}
\begin{document}
\begin{frontmatter}
\author{Tong Wu$^{a}$}
\ead{wut977@nenu.edu.cn}
\author{Yong Wang$^{b,*}$}
\ead{wangy581@nenu.edu.cn}
\cortext[cor]{Corresponding author.}
\address{$^a$Department of Mathematics, Northeastern University, Shenyang, 110819, China}
\address{$^b$School of Mathematics and Statistics, Northeast Normal University,
Changchun, 130024, China}

\title{The pseudo-differential perturbations of the Dirac operator and the Kastler-Kalau-Walze type theorems}
\begin{abstract}
We define two types of pseudo-differential perturbations of the Dirac operator within the framework of the noncommutative geometry. And we obtain the noncommutative residue of the inverse square of these perturbations on 4-dimensional compact manifolds without boundary. As a generalization of the result of noncommutative residue on closed manifolds, we prove the Kastler-Kalau-Walze type theorems for these
perturbations on 4-dimensional compact manifolds with boundary. Finally, several examples in which we can consider the corresponding pseudo-differential operators and the Kastler-Kalau-Walze type theorems are listed.
\end{abstract}
\begin{keyword}The pseudo-differential perturbations; the noncommutative residue; the Kastler-Kalau-Walze type theorems.

\end{keyword}
\end{frontmatter}
\section{Introduction}
 Until now, many geometers have studied noncommutative residues. In \cite{Gu,Wo}, authors found noncommutative residues are of great importance to the study of the noncommutative geometry. In \cite{Co1}, Connes used the noncommutative residue to derive a conformal 4-dimensional Polyakov action analogy. Connes showed us that the noncommutative residue on a compact manifold $M$ coincided with the Dixmier's trace on pseudodifferential operators of order $-{\rm {dim}}M$ in \cite{Co2}.
And Connes claimed the noncommutative residue of the square of the inverse of the Dirac operator was proportioned to the Einstein-Hilbert action.  Kastler \cite{Ka} gave a
brute-force proof of this theorem. Kalau and Walze proved this theorem in the normal coordinates system simultaneously in \cite{KW}.
Ackermann proved that
the Wodzicki residue  of the square of the inverse of the Dirac operator ${\rm  Wres}(D^{-2})$ in turn is essentially the second coefficient
of the heat kernel expansion of $D^{2}$ in \cite{Ac}.

On the other hand, Wang generalized the Connes' results to the case of manifolds with boundary in \cite{Wa1,Wa2},
and proved the Kastler-Kalau-Walze type theorems for the Dirac operator and the signature operator on lower-dimensional manifolds
with boundary \cite{Wa3}. In \cite{Wa3,Wa4}, denote the noncommutative residue of manifolds with boundary by $\widetilde{{\rm Wres}}$, Wang computed $\widetilde{{\rm Wres}}[\pi^+D^{-1}\circ\pi^+D^{-1}]$ and $\widetilde{{\rm Wres}}[\pi^+D^{-2}\circ\pi^+D^{-2}]$, where two operators are symmetric, in these cases the boundary term vanished. But for $\widetilde{{\rm Wres}}[\pi^+D^{-1}\circ\pi^+D^{-3}]$, Wang got a nonvanishing boundary term \cite{Wa5}, and give a theoretical explanation for the gravitational action on boundary. In others words, Wang provides a kind of method to study the Kastler-Kalau-Walze type theorem for manifolds with boundary.

Many perturbations of the Dirac operator by zero-order differential operators are studied in \cite{L1,L2,W1,W3,W4,W2}. Wang proved a Kastler-Kalau-Walze type theorem for the perturbations of the Dirac operator on compact manifolds with (without) boundary and gave two kinds of operator-theoretic explanations of the gravitational action on boundary in \cite{W1}. In \cite{W4}, Wang, Wang and Yang gave two kinds of operator-theoretic
explanations of the gravitational action on 4-dimensional compact manifolds with
flat boundary. Furthermore, they got the Kastler-Kalau-Walze type theorems for 4-dimensional complex manifolds associated with nonminimal operators. In \cite{W2}, Wu, Wang and Wang gave the proofs of the Kastler-Kalau-Walze type theorems for Dirac-Witten operators on 4-dimensional and 6-dimensional compact manifolds with boundary. In \cite{L2}, Liu, Wu and Wang proved the Kastler-Kalau-Walze type theorems for the perturbation of de Rham Hodge operators on 4-dimensional and 6-dimensional compact
manifolds with boundary. In \cite{W3}, Wang, Wang and Wu gave some new
spectral functionals which is the extension of spectral functionals to the noncommutative realm with torsion, and related them to the
noncommutative residue for manifolds with boundary. In \cite{L1},  Li, Wu and Wang established some general Kastler-Kalau-Walze type theorems for any dimensional manifolds with boundary. All of the above studies are the noncommutative residue for laplacian operators. $\mathbf{The~motivation}$ of this paper is
to study the noncommutative residue for non-laplacian operators within the framework of the noncommutative geometry. We consider two types of pseudo-differential perturbations of the Dirac operator whose square are not laplacians and establish the Kastler-Kalau-Walze type theorems for these perturbations on 4-dimensional compact oriented spin manifolds without boundary.\\
\indent Actually, for the Dirac operator, the noncommutative residue of the
square of the inverse of the Dirac operator was proportioned to the Einstein-Hilbert action. In order to prove that the existence of noncommutative residues of some pseudo-differential perturbations of the Dirac operator does not satisfy the above conclusion. We calculate ${\rm Wres}(D+c(X)D^{-1}fD)^{-2}$ and ${\rm Wres}(D+D^{-1}c(X)D)^{-2}$, where ${\rm Wres}$ denotes the noncommutative residue of manifolds without boundary, $f\in C^\infty(M)$ and $c(X)$ is a Clifford action of the vector field $X$ on $M$. This constitutes a proof of the Kastler-Kalau-Walze type theorems for the pseudo-differential perturbations of the Dirac operator $D+c(X)D^{-1}fD$ and $D+D^{-1}c(X)D$ on 4-dimensional manifolds without boundary. Further, we examine several examples, including the Hodge-Dirac triple, almost-commutative two-sheeted space, conformally rescaled noncommutative tori, showing that these noncommutative examples in which we can consider the corresponding pseudo-differential operators and the Kastler-Kalau-Walze type theorems.
Our main theorems are as follows.
\begin{thm}\label{thm1111}
Let $M$ be a $4$-dimensional oriented
compact spin manifold without boundary, then we get the noncommutative residue of the operator $D+c(X)D^{-1}fD$
\begin{align*}
&{\rm Wres}(D+c(X)D^{-1}fD)^{-2}=4\int_{M}\bigg(2\pi^2div_M(fX)+2\pi^2X(df)-2\pi^2|X||df|+\frac{1}{12}s\bigg)d{\rm Vol_{M}},
\end{align*}
where $div_M$ denotes divergence of $M$, $s$ is the scalar curvature and ${\rm Vol_{M}}$ is the volume of $M$.
\end{thm}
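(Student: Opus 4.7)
The approach is to compute, in local coordinates, the full symbol of $P := D + c(X)D^{-1}fD$ to sufficient order and then extract the $(-4)$-homogeneous piece of $\sigma(P^{-2})$, whose fiberwise trace integrated over the cosphere bundle produces the noncommutative residue density. Since $c(X)D^{-1}fD$ is classical of order zero, $P$ remains elliptic of order one with $\sigma_1(P)(x,\xi) = \sqrt{-1}\,c(\xi)$, so $P^{-2}$ has order $-2$ and it is $\sigma_{-4}(P^{-2})$ that we need on a $4$-manifold.

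First I would set $A := c(X)D^{-1}fD$ and record its symbol expansion. Using the known $\sigma(D) = \sqrt{-1}\,c(\xi) + \sigma_0(D)$ and the iterative formula for $\sigma(D^{-1})$, then composing with $\sigma(fD)$ via the Leibniz $\star$-product and multiplying on the left by $c(X)$, I obtain the first three homogeneous pieces $A_0, A_{-1}, A_{-2}$ explicitly in terms of $f$, $X$, their first and second derivatives, and the connection one-form embedded in $\sigma_0(D)$. Adding the symbol of $D$ yields $\sigma(P)$ to the required order.

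The second step is to construct $\sigma(P^{-2})$ down to order $-4$. I would form $P^2 = D^2 + DA + AD + A^2$, note that only the pieces of $\sigma(P^2)$ in orders $2$ through $-2$ enter $\sigma_{-4}(P^{-2})$, and then invert via the standard parametrix recursion with leading symbol $|\xi|^2 I$. Writing $\sigma_{-4}(P^{-2}) = \sigma_{-4}(D^{-2}) + R(x,\xi)$ splits the answer into the classical Kastler--Kalau--Walze contribution (which, after trace and fiber integration, produces the $\tfrac{1}{12}s$ term) and a perturbation remainder $R$ polynomial in $X,f,\nabla X,\nabla f$ contracted with powers of $\xi/|\xi|$. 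I would then take the pointwise trace on spinors and integrate over $\{|\xi|=1\}$ in geodesic normal coordinates at the base point, using $\mathrm{tr}\,c(e_i)c(e_j) = -4\delta_{ij}$, $\mathrm{vol}(S^3) = 2\pi^2$ and $\int_{|\xi|=1}\xi_i\xi_j\,d\xi = \tfrac{\pi^2}{2}\delta_{ij}$, so that the surviving terms collapse to a combination of $\mathrm{div}_M(fX) = f\,\mathrm{div}_M X + X(f)$, the directional derivative $X(df)$, the pointwise product $|X||df|$, and the scalar curvature $s$; matching coefficients then gives the stated formula with the overall factor $4$.

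The principal obstacle is the bookkeeping in step two: extracting $\sigma_{-4}(P^{-2})$ requires a large number of $\star$-product terms, each mixing $\xi$-derivatives of one factor with $x$-derivatives of the other, and the cross-contributions between $A$, the subprincipal $\sigma_0(D)$ and the curvature piece of $\sigma_{-2}(D^{-2})$ must all be tracked to avoid missing or over-counting derivative-of-$f$ and $\mathrm{div}\,X$ contributions. Organizing the calculation by total order and by the number of $A$-factors, together with the Lichnerowicz identity $D^2 = \nabla^{*}\nabla + s/4$ for the pure-$D$ part and the vanishing of Christoffel symbols in normal coordinates, should keep the algebra tractable and produce the three perturbative terms cleanly separated from the classical $s/12$ contribution.
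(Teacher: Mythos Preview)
Your strategy coincides with the paper's: compute the symbol of the square, invert by the standard parametrix recursion with leading symbol $|\xi|^2$, split the order-$(-4)$ piece into the classical Kastler--Kalau--Walze block (yielding the $s/12$ term via Kastler's computation) plus perturbation terms, and then trace and integrate over $|\xi|=1$ in normal coordinates using $\mathrm{tr}\,c(e_i)c(e_j)=-4\delta_{ij}$ and $\int_{|\xi|=1}\xi_i\xi_j\,\sigma(\xi)=\tfrac{\pi^2}{2}\delta_{ij}$.

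One organisational point worth noting: the paper first rewrites the perturbation algebraically as
\[
c(X)D^{-1}fD \;=\; c(X)f - c(X)D^{-1}c(df),
\]
so that the zero-order part $c(X)f$ and the order-$(-1)$ part $-c(X)D^{-1}c(df)$ are separated before any symbol calculus begins. This makes $\sigma_0^{A^2}$ and $\sigma_1^{A^2}$ immediate and cuts down the $\star$-product bookkeeping you flag as the main obstacle; in particular the $M_8,M_9$ terms carrying $c(df)$ drop out cleanly. Your plan to expand $\sigma(c(X)D^{-1}fD)$ directly via Leibniz composition reaches the same place but with more intermediate cancellation to track, so you may find it worthwhile to insert that commutator identity at the start.
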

\begin{thm}\label{thm2222}
Let $M$ be a $4$-dimensional oriented
compact spin manifold without boundary, then we get the noncommutative residue of the operator $D+D^{-1}c(X)D$
\begin{align*}
&{\rm Wres}(D+D^{-1}c(X)D)^{-2}=4\int_{M}\bigg(-4\pi^2|X|^2+3\pi^2div_M(X)+\frac{1}{12}s\bigg) d{\rm Vol_{M}}.
\end{align*}
\end{thm}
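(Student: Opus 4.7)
The plan is to apply Wodzicki's formula
$$\text{Wres}(Q)\;=\;\int_{M}\!\int_{|\xi|=1}\text{tr}\bigl(\sigma_{-4}(Q)(x,\xi)\bigr)\,d\xi\,d\text{Vol}_{M}$$
to $Q=P^{-2}$ with $P:=D+D^{-1}c(X)D$, so the whole argument reduces to isolating the order $-4$ component of the full symbol of $P^{-2}$, taking the Clifford trace, and integrating over the unit cosphere in $\mathbb{R}^{4}$.

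The first step is a reduction at the operator level. Multiplying out and using the Clifford identity $c(X)^{2}=-|X|^{2}$ gives
\begin{equation*}
P^{2}\;=\;D^{2}+B,\qquad B\;:=\;c(X)D\;+\;D^{-1}c(X)D^{2}\;-\;D^{-1}|X|^{2}D,
\end{equation*}
with $B$ of order $1$. The Neumann expansion
\begin{equation*}
P^{-2}\;\sim\;D^{-2}\;-\;D^{-2}BD^{-2}\;+\;D^{-2}BD^{-2}BD^{-2}\;-\;\cdots
\end{equation*}
is therefore effective, and only its first three summands contribute to $\sigma_{-4}$: the remaining iterates have order $\le -5$. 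The leading term $D^{-2}$ produces the classical Kastler--Kalau--Walze contribution, which accounts for the $\tfrac{1}{12}s$ summand in the final answer; what remains is to extract the $|X|^{2}$-- and $\operatorname{div}_{M}(X)$--contributions from the two remaining terms.

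The second step is the symbolic computation of $\sigma_{-4}(D^{-2}BD^{-2})$ and $\sigma_{-4}(D^{-2}BD^{-2}BD^{-2})$ via the composition rule
$$\sigma(P_{1}P_{2})\;\sim\;\sum_{|\alpha|\ge 0}\frac{(-i)^{|\alpha|}}{\alpha!}\,\partial_{\xi}^{\alpha}\sigma(P_{1})\,\partial_{x}^{\alpha}\sigma(P_{2}),$$
worked out in normal coordinates centered at an arbitrary $x_{0}\in M$. The principal symbol of $B$ simplifies sharply via the anticommutator $\{c(X),c(\xi)\}=-2\langle X,\xi\rangle$, and the order-zero part of $B$ carries $-|X|^{2}$ from the $-D^{-1}|X|^{2}D$ summand together with spin-connection and derivative-of-$X$ contributions from the other two summands. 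These feed into $\sigma_{-4}(P^{-2})$ in two distinct ways: (i) directly as products of principal/subprincipal symbols, producing the $|X|^{2}$-terms; (ii) through the $\partial_{x}$-derivatives in the composition rule acting on the components $X^{j}(x)$, producing a $\operatorname{div}_{M}(X)$-term after the fibrewise moment identity $\int_{|\xi|=1}\xi_{i}\xi_{j}\,d\xi=\frac{\pi^{2}}{2}\delta_{ij}$ on $S^{3}\subset\mathbb{R}^{4}$.

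Finally I would take the Clifford trace, using $\text{tr}(\text{Id})=4$ on the spin bundle together with the vanishing of Clifford traces of odd products $\text{tr}(c(e_{i_{1}})\cdots c(e_{i_{2k+1}}))=0$, and perform the $S^{3}$-integration via $\int_{|\xi|=1}d\xi=2\pi^{2}$. Summing the three contributions yields the advertised combination $-4\pi^{2}|X|^{2}+3\pi^{2}\operatorname{div}_{M}(X)+\tfrac{1}{12}s$, with an overall factor $4$ pulled out. The principal obstacle is the sheer volume of bookkeeping in the symbol calculus: every application of the composition rule produces several Clifford-valued monomials, many of which must be simplified or combined via Clifford identities, and one must verify that the only surviving traces are precisely those assembling into $|X|^{2}$, $\operatorname{div}_{M}(X)$, and $s$ with the stated coefficients. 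Normal coordinates kill the Christoffel symbols at $x_{0}$ but not their first derivatives, which is what supplies the scalar-curvature term, so even with this simplification the intermediate expressions remain lengthy and demand careful tracking.
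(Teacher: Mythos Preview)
Your proposal is correct and the strategy is sound, but it is organised differently from the paper's proof. The paper does \emph{not} use a Neumann expansion of $P^{-2}$ in powers of $B=P^{2}-D^{2}$; instead it writes down the order-$2$, order-$1$, and order-$0$ symbols of $P^{2}$ directly (equation block for $\sigma^{B^{2}}_{2},\sigma^{B^{2}}_{1},\sigma^{B^{2}}_{0}$) and then applies the standard recursion $\sigma^{P^{-2}}_{-k}=-\sigma^{P^{-2}}_{-2}\bigl[\cdots\bigr]$ to obtain $\sigma^{P^{-2}}_{-4}$ as a sum of explicit terms $N_{1},\dots,N_{10}$ (identical to Kastler's pure-Dirac terms, giving the $\tfrac{1}{12}s$) plus eight new terms $R_{1},\dots,R_{8}$ which are then traced and integrated one by one in normal coordinates. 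Your Neumann approach buys a cleaner conceptual separation---the pure $D^{-2}$ term visibly supplies the scalar curvature and the higher iterates supply the $|X|^{2}$ and $\operatorname{div}_{M}(X)$ pieces---whereas the paper's recursion approach is more mechanical but keeps all contributions in a single symbol list. The actual Clifford and $S^{3}$-moment computations you would perform are the same in either scheme, and the bookkeeping load is comparable.
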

\begin{rem}
We can generalize Theorem \ref{thm1111} and Theorem \ref{thm2222} to the examples in Section \ref{section:4}.
\end{rem}
\indent The paper is organized in the following way. In Section \ref{section:2}, firstly, by using the formula of the noncommutative residue of the pseudo-differential operator $P$, we obtain the noncommutative residue of the inverse square of the first type pseudo-differential perturbation of the Dirac operator on 4-dimensional mani-
folds without boundary. Next, we recall some basic facts and formulas about Boutet de
Monvel's calculus and prove the Kastler-Kalau-Walze type theorem of the first type pseudo-differential perturbation on 4-dimensional manifolds with boundary. In Section \ref{section:3},
 we prove the Kastler-Kalau-Walze type theorem of the second type pseudo-differential perturbation of the Dirac operator on 4-dimensional manifolds with boundary. In Section \ref{section:4}, we introduce several examples of spectral triples, all of which are important examples of the noncommutative geometry.
\section{The first type pseudo-differential perturbation of the Dirac operator $D+c(X)D^{-1}fD$}
\label{section:2}
In this section, on the basis of the Dirac operator, we define the first type pseudo-differential perturbation of the Dirac operator $D+c(X)D^{-1}fD$, and prove the Kastler-Kalau-Walze type theorem of the operator $D+c(X)D^{-1}fD$ on 4-dimensional manifolds with boundary.
\subsection{${\rm Wres}(D+c(X)D^{-1}fD)^{-2}$ on manifolds without boundary}
Let $M$ be a 4-dimensional compact oriented spin manifold with Riemannian metric $g$, and let $\nabla^L$ be the Levi-Civita connection about $g$.
We recall that the Dirac operator $D$ is locally given as follows in terms of an orthonormal section $e_i$ (with dual section $\theta^k$) of the frame bundle of M \cite{Ka}:
\begin{align*}
&D=i\gamma^i\widetilde{\nabla}_i=i\gamma^i(e_i+\sigma_i);\nonumber\\
&\sigma_i(x)=\frac{1}{4}\gamma_{ij,k}(x)\gamma^i\gamma^k=\frac{1}{8}\gamma_{ij,k}(x)(\gamma^j\gamma^k-\gamma^k\gamma^j),
\end{align*}
where $\gamma_{ij,k}$ represents the Levi-Civita connection $\nabla$ with spin connection $\widetilde{\nabla}$, specifically:
\begin{align*}
&\gamma_{ij,k}=-\gamma_{ik,j}=\frac{1}{2}(c_{ij,k}+c_{ki,j}+c_{kj,i}),~~~i,j,k=1,\cdot\cdot\cdot,4;\nonumber\\
&c_{ij}^k=\theta^k([e_i.e_j]).
\end{align*}
Here the $\gamma^i$ are constant self-adjoint Dirac matrices s.t. $\gamma^i\gamma^j+\gamma^j\gamma^i=-2\delta^{ij}.$ In terms
of local coordinates $x^\mu$ inducing the alternative vierbein $\partial_\mu=S_\mu^i(x)e_i$ (with dual
vierbein $dx^\mu$), we have $\gamma^ie_i=\gamma^\mu \partial_\mu$, the $\gamma^\mu$  being now $x$-dependent Dirac matrices s.t. $\gamma^\mu\gamma^\nu+\gamma^\nu\gamma^\mu=-2g^{\mu\nu}$ (we use latin sub-(super-) scripts for the basic $e_i$ and greek
sub-(super-) scripts for the basis $\partial_\mu$, the type of sub-(super-) scripts specifying the type
of Dirac matrices). The specification of the Dirac operator in the greek basis is as
follows: one has
\begin{align*}
&D=i\gamma^\mu\widetilde{\nabla}_\mu=i\gamma^\mu(e_\mu+\sigma_\mu);\nonumber\\
&\sigma_\mu(x)=S_\mu^i(x)\sigma_i.
\end{align*}
Now, we define the first type pseudo-differential perturbation of the Dirac operator, set $A=D+c(X)D^{-1}fD$, where $c(X)$ is a Clifford action on $M$ and $X=\sum_{\alpha=1}^na_{\alpha}e_\alpha=\sum_{j=1}^nX_j\partial_j$ is a vector field, $f$ is a $C^\infty(M)$ function.
By further expansion, we have
\begin{align*}
A&=D+c(X)D^{-1}(Df+[f,D])=D+c(X)f-c(X)D^{-1}c(df);\\
A^2&=D^2+Dc(X)f+c(X)fD+[c(X)f]^2-Dc(X)D^{-1}c(df)-c(X)fc(X)D^{-1}c(df)\nonumber\\
&-c(X)D^{-1}c(df)D-c(X)D^{-1}c(df)c(X)f+[c(X)D^{-1}c(df)]^2.
\end{align*}
Since we know that $A^2$ is not a generalized laplacian, so we can not use the heat kernel coefficient to compute its residue. Next, our computations are based on the algorithm yielding the principal symbol of a
product of pseudo-differential operators in terms of the principal symbols of the
factors, namely, with the shorthand $\partial^\alpha_\xi=\partial^\alpha/\partial\xi_\alpha,~~\partial_\alpha^x=\partial_\alpha/\partial x^\alpha:$
\begin{align}\label{1111}
\sigma^{PQ}(x,\xi)=\sum_\alpha\frac{(-i)^\alpha}{\alpha!}\partial^\alpha_\xi\sigma^{P}(x,\xi)\cdot\partial_\alpha^x\sigma^{Q}(x,\xi).
\end{align}
We need to compute the total symbol $\sigma(x,\xi)$ of $A^{-2}$ up to order -4, with $A^2$ the
following sum of terms $A^2_k$ of order $k$:
$$
A^2=(A^2)_2+(A^2)_1+(A^2)_0+(A^2)_{<0}.
$$
\begin{align}\label{2222}
\sigma^{A^2}_2(x,\xi)&=|\xi|^2;\nonumber\\
\sigma^{A^2}_1(x,\xi)&=ifc(\xi)c(X)+ifc(X)c(\xi)+i(\Gamma^\mu-2\sigma^\mu)\xi_\mu;\nonumber\\
\sigma^{A^2}_0(x,\xi)&=-(\partial^x\sigma_\mu+\sigma^\mu\sigma_\mu-\Gamma^\mu\sigma_\mu)+\frac{1}{4}s+if\gamma^\mu\sigma_\mu c(X)+ifc(X)\gamma^\mu\sigma_\mu\nonumber\\
&-|\xi|^{-2}c(\xi)c(X)c(\xi)c(df)-|\xi|^{-2}c(X)c(\xi)c(df)c(\xi)+f^2|X|^2.
\end{align}
We want to compute a parametrix $A^{-2}$ of $A^2$ up to order $-4$ using the above
recipe: this amounts to computing the parts $\sigma^{A^{-2}}_{-k},k=2,3,4,$ in the expansion of the
full symbol $\sigma$ of $A^2$ into terms of decreasing order:
\begin{align*}
\sigma^{A^{-2}}=\sigma^{A^{-2}}_{-2}+\sigma^{A^{-2}}_{-3}+\sigma^{A^{-2}}_{-4}+terms~~of~~order \leq -5.
\end{align*}
Application of (\ref{1111}) with $P=A^2$ and $Q=A^{-2}$ yields in the respective orders
$0,- 1,-2$ the recurrence relations:
\begin{align}\label{1221}
\sigma^{A^{-2}}_{-2}&=(\sigma^{A^{2}}_2)^{-1};\nonumber\\
\sigma^{A^{-2}}_{-3}&=-\sigma^{A^{-2}}_{-2}[\sigma^{A^{2}}_1\sigma^{A^{-2}}_{-2}-i\partial_{\xi}^\mu\sigma^{A^{2}}_2\partial_\mu^x\sigma^{A^{-2}}_{-2}];\nonumber\\
\sigma^{A^{-2}}_{-4}&=-\sigma^{A^{-2}}_{-2}[\sigma^{A^{2}}_1\sigma^{A^{-2}}_{-3}+\sigma^{A^{2}}_0\sigma^{A^{-2}}_{-2}-i\partial_{\xi}^\mu\sigma^{A^{2}}_1\partial_\mu^x\sigma^{A^{-2}}_{-2}-i\partial_{\xi}^\mu\sigma^{A^{2}}_2\partial_\mu^x\sigma^{A^{-2}}_{-3}].
\end{align}
Put (\ref{2222}) into (\ref{1221}), we have
\begin{align}\label{3355}
\sigma^{A^{-2}}_{-2}&=|\xi|^{-2};\nonumber\\
\sigma^{A^{-2}}_{-3}&=-|\xi|^{-2}[(ifc(\xi)c(X)+ifc(X)c(\xi)+i(\Gamma^\mu-2\sigma^\mu)\xi_\mu)|\xi|^{-2}-i\partial^\mu_\xi(|\xi|^2)\partial_\mu^x(|\xi|^{-2})];\nonumber\\
\sigma^{A^{-2}}_{-4}&=-|\xi|^{-6}\xi_\mu\xi_\nu(\Gamma^\mu-2\sigma^\mu)(\Gamma^\nu-2\sigma^\nu)-2|\xi|^{-8}\xi^\mu\xi_\alpha\xi_\beta\xi_\nu(\Gamma^\nu-2\sigma^\nu)\partial^x_\mu g^{\alpha\beta}+|\xi|^{-4}(\partial^{x\mu}\sigma_\mu+\sigma^\mu\sigma_\mu\nonumber\\
&-\Gamma^\mu\sigma_\mu)-\frac{1}{4}|\xi|^{-4}s-2i|\xi|^{-2}\xi^\mu\cdot\partial^x_\mu\sigma_{-3}+|\xi|^{-6}\xi_\alpha\xi_\beta(\Gamma^\mu-2\sigma^\mu)\partial^x_\mu g^{\alpha\beta}-|\xi|^{-6}\xi_\alpha\xi_\beta g^{\mu\nu}\partial^x_{\mu\nu} g^{\alpha\beta}\nonumber\\
&+2|\xi|^{-8}\xi_\alpha\xi_\beta\xi_\gamma\xi_\delta g^{\mu\nu}\partial^x_\mu g^{\alpha\beta}\partial^x_\nu g^{\gamma\delta}-|\xi|^{-6}f(c(\xi)c(X)+c(X)c(\xi))\xi_\mu(\Gamma^\mu-2\sigma^\mu)-|\xi|^{-4}c(\xi)c(X)\nonumber\\
&c(\xi)c(df)+2|\xi|^{-8}f(c(\xi)c(X)+c(X)c(\xi))\xi^\mu\xi_\alpha\xi_\beta\partial^x_\mu g^{\alpha\beta}-|\xi|^{-6}f^2(c(\xi)c(X)+c(X)c(\xi))^2+|\xi|^{-2}\nonumber\\
&f^2|X|^2-|\xi|^{-6}f\xi_\mu(\Gamma^\mu-2\sigma^\mu)(c(\xi)c(X)+c(X)c(\xi))+i|\xi|^{-2}f(\gamma^\mu\sigma_\mu c(X)+c(X)\gamma^\mu\sigma_\mu )-|\xi|^{-4}\nonumber\\
&c(X)c(\xi)c(df)c(\xi)+|\xi|^{-4}\partial^\mu_\xi[f(c(\xi)c(X)+c(X)c(\xi))]\xi_\alpha\xi_\beta\partial^x_\mu g^{\alpha\beta},
\end{align}
where $s$ is the scalar curvature.

Regrouping the terms and inserting:
\begin{align*}
\partial^x_\mu\sigma^{A^{-2}}_{-3}&=2i|\xi|^{-6}\xi_\nu\xi_\alpha\xi_\beta(\Gamma^\nu-2\sigma^\nu)\partial^x_\mu g^{\alpha\beta}-i|\xi|^{-4}\xi_\nu\partial^x_\mu(\Gamma^\nu-2\sigma^\nu)+6i|\xi|^{-8}\xi_\nu\xi_\alpha\xi_\beta\xi_\gamma\xi_\delta\partial^x_\mu g^{\alpha\beta}\partial^x_\nu g^{\gamma\delta}\nonumber\\
&-2i|\xi|^{-6}\xi_\alpha\xi_\gamma\xi_\delta\partial^x_\mu g^{\nu\alpha}\partial^x_\nu g^{\gamma\delta}-2i|\xi|^{-6}\xi^\nu\xi_\gamma\xi_\delta\partial^x_{\mu\nu}g^{\gamma\delta}-i\partial^x_\mu [|\xi|^{-4}f(c(\xi)c(X)+c(X)c(\xi))].
\end{align*}
We get for $\sigma^{A^{-2}}_{-4}$ the sum of terms:
\begin{align*}
N_1&=-|\xi|^{-6}\xi_\mu\xi_\nu\Gamma^\mu\Gamma^\nu+|\xi|^{-4}[g_{\mu\nu}-|\xi|^{-4}\xi_{\mu\nu}][\sigma^\mu\sigma^\nu-\Gamma^\nu\sigma^\nu];~~~N_2=|\xi|^{-4}\partial^{x\mu}\sigma_\mu-\frac{1}{4}|\xi|^{-4}s;\nonumber\\
N_3&=-6|\xi|^{-8}\xi^\mu\xi_\nu\xi_\alpha\xi_\beta(\Gamma^\nu-2\sigma^\nu)\partial^x_\mu  g^{\alpha\beta};~~~
N_4=2|\xi|^{-6}\xi^\mu\xi_\nu\partial^x_\mu(\Gamma^\nu-2\sigma^\nu);\nonumber\\
N_5&=-12|\xi|^{-10}\xi^\mu\xi^\nu\xi_\alpha\xi_\beta\xi_\gamma\xi_\delta\partial^x_\mu  g^{\alpha\beta}\partial^x_\nu g^{\gamma\delta};~~~
N_6=4|\xi|^{-8}\xi^\mu\xi_\alpha\xi_\gamma\xi_\delta\partial^x_\mu g^{\nu\alpha}\partial^x_\nu g^{\gamma\delta};\nonumber\\
N_7&=|\xi|^{-6}\xi_\alpha\xi_\beta(\Gamma^\mu-2\sigma^\mu)\partial^x_\mu  g^{\alpha\beta};~~~
N_8=4|\xi|^{-8}\xi^\mu\xi^\nu\xi_\gamma\xi_\delta\partial^x_{\mu\nu}  g^{\gamma\delta};\nonumber\\
N_9&=-|\xi|^{-6}\xi_\alpha\xi_\beta g^{\mu\nu}\partial^x_{\mu\nu}  g^{\alpha\beta};~~~
N_{10}=2|\xi|^{-8}\xi_\alpha\xi_\beta\xi_\gamma\xi_\delta g^{\mu\nu}\partial^x_\mu  g^{\alpha\beta}\partial^x_\nu g^{\gamma\delta},
\end{align*}
and
\begin{align*}
M_1&=-|\xi|^{-6}f[c(\xi)c(X)+c(X)c(\xi)]\xi_\mu(\Gamma^\mu-2\sigma^\nu);~~
M_2=2|\xi|^{-8}f[c(\xi)c(X)+c(X)c(\xi)]\xi^\mu\xi_\alpha\xi_\beta\partial^x_\mu  g^{\alpha\beta};\nonumber\\
M_3&=-|\xi|^{-6}f^2[c(\xi)c(X)+c(X)c(\xi)]^2;~~~
M_4=-|\xi|^{-6}f\xi_\mu(\Gamma^\mu-2\sigma^\nu)[c(\xi)c(X)+c(X)c(\xi)];\nonumber\\
M_5&=-2|\xi|^{-2}\xi^\mu\partial^x_\nu [|\xi|^{-2}f(c(\xi)c(X)+c(X)c(\xi))];~~~
M_6=i|\xi|^{-2}f[\gamma^\mu\sigma_\mu c(X)+c(X)\gamma^\mu\sigma_\mu ];\nonumber\\
M_7&=-|\xi|^{-4}\partial_\xi^\mu[f(c(\xi)c(X)+c(X)c(\xi))]\xi_\alpha\xi_\beta\partial^x_\mu  g^{\alpha\beta};~~~
M_8=-|\xi|^{-4}c(\xi)c(X)c(\xi)c(df);\nonumber\\
M_9&=-|\xi|^{-4}c(X)c(\xi)c(df)c(\xi);~~~
M_{10}=|\xi|^{-2}f^2|X|^2.
\end{align*}

For a pseudo-differential operator $P$, acting on sections of a vector bundle over an $n$-dimensional compact
Riemannian manifold $M$ in \cite{Ka}, it is defined by
\begin{align}\label{666}
{\rm Wres}(P):=\int_{S^*M}{\rm tr}(\sigma_{-n}^P)(x,\xi),
\end{align}
where $\xi\in S^{n-1},$ ${\rm tr}$ is shorthand of trace.

By (\ref{666}), we get the noncommutative residue of the operator $D+c(X)D^{-1}fD$ on 4-dimensional manifolds without boundary. Next, we need to compute the following formula
\begin{align*}
{\rm Wres}(D+c(X)D^{-1}fD)^{-2}=\int_{M}\int_{|\xi|=1}{\rm tr}(\sigma^{A^{-2}}_{-4})(x_0)\sigma(\xi)dx,
\end{align*}
where $x_0\in M$ is any fixed point.

Because from \cite{Ka}, we obtain $\int_{|\xi|=1}{\rm tr}[\sum_{i=1}^{10}N_i](x_0)\sigma(\xi)=-\frac{1}{12}s{\rm tr}[{\rm \texttt{id}}].$
Then, we only need to compute $$\int_{|\xi|=1}{\rm tr}[\sum_{i=1}^{10}M_i](x_0)\sigma(\xi).$$

$\mathbf{(1):}$
 By $c(\xi)=c(\xi^*),$ $\xi(X)=g(\xi^*,X)$ and $c(\xi)c(X)+c(X)c(\xi)=-2\xi(X)$, we have
\begin{align}
\label{a26}
&{\rm tr}(M_1)_{|\xi|=1}=2f\xi(X)\xi_\mu{\rm tr}(g^{\alpha\beta}\Gamma^\mu_{\alpha\beta}-2\sigma^\mu).
\end{align}
Using the facts:
$\Gamma^\mu_{\alpha\beta}(x_0)=\sigma_\mu(x_0)=0,$ then in normal coordinates, the result of this term $M_1$ disappears.

$\mathbf{(2):}$
\begin{align}\label{yyy}
&{\rm tr}(M_2)_{|\xi|=1}=-4f\xi(X)\xi^\mu\xi_\alpha\xi_\beta{\rm tr}(\partial_\mu^xg^{\alpha\beta}).
\end{align}
Using the facts:
$\partial_\mu^xg^{\alpha\beta}(x_0)=0,$ then in normal coordinates, the result of this term $M_2$ disappears.

$\mathbf{(3):}$ By $e_j^*(x_0)=dx_j$ and $\int_{|\xi|=1}\xi_j\xi_l\sigma(\xi)=\frac{1}{4}\delta_{jl}Vol_{S^3}=\frac{\pi^2}{2}\delta_{jl}$, we have
\begin{align}\label{pppppp}
{\rm tr}(M_3)_{|\xi|=1}&=-4f^2\xi(X)^2\nonumber\\
&=-4f^2\sum_j\xi_jdx_j\sum_l\xi_ldx_l{\rm tr}[{\rm \texttt{id}}]\nonumber\\
&=-4f^2\sum_{j,l}\xi_j\xi_le_j^*(x_0)e_l^*(x_0){\rm tr}[{\rm \texttt{id}}],
\end{align}
and
\begin{align}\label{lll}
\int_{|\xi|=1}{\rm tr}(M_3)(x_0)\sigma(\xi)&=-4f^2\frac{\pi^2}{2}\sum_je_j^*(x_0)e_j^*(x_0){\rm tr}[{\rm \texttt{id}}]\nonumber\\
&=-2f^2\pi^2|X|^2{\rm tr}[{\rm \texttt{id}}].
\end{align}
$\mathbf{(4):}$ By $\mathbf{(1)}$ and $\mathbf{(2)}$, in normal coordinates, the result of this term $M_4,M_6,M_7$ disappear.\\
$\mathbf{(5):}$
\begin{align*}
{\rm tr}(M_5)(x_0)=4\sum_{\mu,l}\xi_\mu\xi_l{\rm tr}[\partial_\mu^x(dx_l(x)f)](x_0).
\end{align*}
Then
\begin{align*}
\int_{|\xi|=1}{\rm tr}(M_5)(x_0)\sigma(\xi)&=2\pi^2{\rm tr}[\sum_l\partial^*_l(dx_l(fX))](x_0)\nonumber\\
&=2\pi^2{\rm tr}[\sum_le_l(g\langle e_l,fX\rangle)](x_0)\nonumber\\
&=2\pi^2div_M(fX){\rm tr}[{\rm \texttt{id}}],
\end{align*}
where $div_M$ denotes divergence of $M$.\\
$\mathbf{(6):}$ By ${\rm tr}(ab)={\rm tr}(ba),$ we have
\begin{align*}
{\rm tr}(M_8)(x_0)={\rm tr}(M_9)(x_0)&={\rm tr}[-c(\xi)c(X)c(\xi)c(df)]\nonumber\\
&={\rm tr}[2g(X,\xi)c(\xi)c(df)-c(X)c(df)]\nonumber\\
&={\rm tr}[2\xi(X)c(\xi)c(df)-c(X)c(df)]\nonumber\\
&={\rm tr}[-2\xi(X)\xi(df)+X(df)],
\end{align*}
then
\begin{align*}
\int_{|\xi|=1}{\rm tr}[X(df)]\sigma(\xi)=2\pi^2X(df){\rm tr}[{\rm \texttt{id}}],
\end{align*}
and
\begin{align*}
\int_{|\xi|=1}{\rm tr}[-2\xi(X)\xi(df)]\sigma(\xi)&=\int_{|\xi|=1}-2\sum_j\xi_je_j^*(x)\sum_l\xi_le_l^*(df)\sigma(\xi)\nonumber\\
&=-2\pi^2|X||df|{\rm tr}[{\rm \texttt{id}}].
\end{align*}
$\mathbf{(7):}$
\begin{align}\label{a2666}
\int_{|\xi|=1}{\rm tr}(M_{10})\sigma(\xi)&=\int_{|\xi|=1}f^2|X|^2\sigma(\xi)\nonumber\\
&=2f^2\pi^2|X|^2{\rm tr}[{\rm \texttt{id}}].
\end{align}
Thus
\begin{align*}
\int_{|\xi|=1}{\rm tr}(\sigma^{A^{-2}}_{-4})(x_0)\sigma(\xi)=\bigg(2\pi^2div_M(fX)+2\pi^2X(df)-2\pi^2|X||df|+\frac{1}{12}s\bigg){\rm tr}[{\rm \texttt{id}}].
\end{align*}
When $n=4$, ${\rm tr}_{S(TM)}[{\rm \texttt{id}}]=4$, we have the following result.
\begin{thm}\label{thm2}Let $M$ be a $4$-dimensional oriented
compact spin manifold without boundary, then we get the noncommutative residue of the operator $D+c(X)D^{-1}fD$
\begin{align*}
&{\rm Wres}(D+c(X)D^{-1}fD)^{-2}=4\int_{M}\bigg(2\pi^2div_M(fX)+2\pi^2X(df)-2\pi^2|X||df|+\frac{1}{12}s\bigg)d{\rm Vol_{M}},
\end{align*}
where ${\rm Vol_{M}}$ is the volume of $M$.
\end{thm}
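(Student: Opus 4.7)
The plan is to specialise the definition (\ref{666}) of the noncommutative residue to $P=A^{-2}$, so that the problem becomes
\[
{\rm Wres}(A^{-2})=\int_{M}\int_{|\xi|=1}{\rm tr}(\sigma^{A^{-2}}_{-4})(x,\xi)\,\sigma(\xi)\,dx,
\]
and hence reduces to extracting the homogeneous component of order $-4$ from the full symbol of $A^{-2}$, tracing over the spinor bundle, and integrating over the cosphere and over $M$.

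First I would rewrite $A$ in a form suitable for symbolic calculus. Using $D^{-1}fD=f-D^{-1}c(df)$ one obtains $A=D+c(X)f-c(X)D^{-1}c(df)$, a sum of pseudo-differential pieces of orders $1$, $0$ and $-1$. Squaring and collecting contributions by order yields $\sigma^{A^{2}}_{2}=|\xi|^{2}$; a first-order piece in which the Clifford anticommutator $c(\xi)c(X)+c(X)c(\xi)=-2\xi(X)$ couples to $f$ and to the spin connection $\sigma_{\mu}$; and an order-zero piece mixing the usual Lichnerowicz ingredients $\partial\sigma,\sigma\sigma,\Gamma\sigma,\tfrac14 s$ with genuinely new terms produced by composing $c(X)D^{-1}c(df)$ with $D$ and with $c(X)f$, together with the square $(c(X)f)^{2}=-f^{2}|X|^{2}$.

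Next I would iterate the composition formula (\ref{1111}) via the recurrence (\ref{1221}) to produce $\sigma^{A^{-2}}_{-2}=|\xi|^{-2}$, then $\sigma^{A^{-2}}_{-3}$, and finally $\sigma^{A^{-2}}_{-4}$. I would then split the resulting expression into a purely metric block $\sum_{i=1}^{10}N_{i}$, which agrees with the symbol arising in the classical Kastler-Kalau-Walze computation and integrates on the cosphere to the familiar $\tfrac{1}{12}s\,{\rm tr}[{\rm id}]$ term, and a perturbation block $\sum_{i=1}^{10}M_{i}$ carrying all dependence on $X$, $f$ and $c(df)$. To evaluate the perturbation block I would fix $x_{0}\in M$ and work in normal coordinates there, so that $\Gamma^{\mu}_{\alpha\beta}(x_{0})=0$, $\sigma_{\mu}(x_{0})=0$ and $\partial_{\mu}g^{\alpha\beta}(x_{0})=0$, immediately killing $M_{1},M_{2},M_{4},M_{6},M_{7}$. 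The survivors are handled by the Clifford identities $c(\xi)c(X)+c(X)c(\xi)=-2\xi(X)$ and $c(\xi)c(X)c(\xi)=2\xi(X)c(\xi)-|\xi|^{2}c(X)$ together with the sphere integral $\int_{|\xi|=1}\xi_{j}\xi_{l}\,\sigma(\xi)=\tfrac{\pi^{2}}{2}\delta_{jl}$: $M_{3}$ and $M_{10}$ cancel, $M_{5}$ delivers the divergence $2\pi^{2}{\rm div}_{M}(fX)$, and $M_{8}+M_{9}$ combine to $2\pi^{2}X(df)-2\pi^{2}|X||df|$. Adding the metric contribution and multiplying by ${\rm tr}_{S(TM)}[{\rm id}]=4$ yields the stated formula.

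The main obstacle I anticipate is the bookkeeping for $\sigma^{A^{2}}_{0}$ and for the ensuing inversion: the perturbation $-c(X)D^{-1}c(df)$ is itself of order $-1$, so its composition with $D$, $c(X)f$, and with itself feeds zero-order cross terms into $\sigma^{A^{2}}_{0}$ which must all be tracked, and these in turn propagate into $\sigma^{A^{-2}}_{-4}$ through the $\sigma^{A^{2}}_{1}\sigma^{A^{-2}}_{-3}$ and $\partial^{\mu}_{\xi}\sigma^{A^{2}}_{1}\partial^{x}_{\mu}\sigma^{A^{-2}}_{-2}$ terms of (\ref{1221}). Once the list $\{M_{i}\}$ is complete and no contribution is double-counted, the remaining Clifford-algebraic simplifications and sphere integrations are routine.
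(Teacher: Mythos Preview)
Your proposal is correct and follows essentially the same approach as the paper: the same rewriting $A=D+c(X)f-c(X)D^{-1}c(df)$, the same use of the composition formula and the parametrix recurrence (\ref{1221}), the same splitting of $\sigma^{A^{-2}}_{-4}$ into the classical Kastler block $\{N_i\}$ and the perturbation block $\{M_i\}$, and the same normal-coordinate evaluation in which $M_1,M_2,M_4,M_6,M_7$ vanish, $M_3$ cancels $M_{10}$, $M_5$ yields the divergence term, and $M_8+M_9$ yield the remaining two terms. The bookkeeping obstacle you flag is exactly the one the paper works through explicitly.
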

\subsection{${\rm \widetilde{Wres}}[\pi^+(D+c(X)D^{-1}fD)^{-1}\circ \pi^+(D+c(X)D^{-1}fD)^{-1}]$ on manifolds with boundary}
\subsubsection{Boutet de
Monvel's calculus}
 In this subsubsection, we recall some basic facts and formulas about Boutet de
Monvel's calculus and the definition of the noncommutative residue for manifolds with boundary which will be used in the followings. For more details, see Section 2 in \cite{Wa3}.\\
 \indent Let $M$ be an n-dimensional compact oriented spin manifold with boundary $\partial M$.
We assume that the metric $g$ on $M$ has the following form near the boundary,
$$
g=\frac{1}{h(x_{n})}g^{\partial M}+dx _{n}^{2},
$$
where $g^{\partial M}$ is the metric on $\partial M$, and $h(x_n)\in C^{\infty}([0, 1)):=\{\widehat{h}|_{[0,1)}|\widehat{h}\in C^{\infty}((-\varepsilon,1))\}$ for
some $\varepsilon>0$ and $h(x_n)$ satisfies $h(x_n)>0$, $h(0)=1$, where $x_n$ denotes the normal directional coordinate. Let $U\subset M$ be a collar neighborhood of $\partial M$ which is diffeomorphic with $\partial M\times [0,1)$. By the definition of $h(x_n)\in C^{\infty}([0,1))$
and $h(x_n)>0$, there exists $\widehat{h}\in C^{\infty}((-\varepsilon,1))$ such that $\widehat{h}|_{[0,1)}=h$ and $\widehat{h}>0$ for some
sufficiently small $\varepsilon>0$. Thus, on $\widetilde{M}=M\bigcup_{\partial M}\partial M\times
(-\varepsilon,0]$, there exists a metric $g'$ which has the following form on $U\bigcup_{\partial M}\partial M\times (-\varepsilon,0 ]$,
\begin{equation*}
g'=\frac{1}{\widehat{h}(x_{n})}g^{\partial M}+dx _{n}^{2} ,
\end{equation*}
such that $g'|_{M}=g$. And we fix a metric $g'$ on the $\widetilde{M}$ such that $g'|_{M}=g$.

Let the Fourier transformation $F'$  be
\begin{equation*}
F':L^2({\bf R}_t)\rightarrow L^2({\bf R}_v);~F'(u)(v)=\int_\mathbb{R} e^{-ivt}u(t)dt
\end{equation*}
and let
\begin{equation*}
r^{+}:C^\infty ({\bf R})\rightarrow C^\infty (\widetilde{{\bf R}^+});~ f\rightarrow f|\widetilde{{\bf R}^+};~
\widetilde{{\bf R}^+}=\{x\geq0;x\in {\bf R}\}.
\end{equation*}
Here $u(t)$ is a complex value function, not a real value function.

We define $H^+=F'(\Phi(\widetilde{{\bf R}^+}));~ H^-_0=F'(\Phi(\widetilde{{\bf R}^-}))$ which satisfies
$H^+\bot H^-_0$, where $\Phi(\widetilde{{\bf R}^+}) =r^+\Phi({\bf R})$, $\Phi(\widetilde{{\bf R}^-}) =r^-\Phi({\bf R})$ and $\Phi({\bf R})$
denotes the Schwartz space. We have the following
 property: $h\in H^+~$ (resp. $H^-_0$) if and only if $h\in C^\infty({\bf R})$ which has an analytic extension to the lower (resp. upper) complex
half-plane $\{{\rm Im}\xi<0\}$ (resp. $\{{\rm Im}\xi>0\})$ such that for all nonnegative integer $l$,
 \begin{equation}\label{pqow}
\frac{d^{l}h}{d\xi^l}(\xi)\sim\sum^{\infty}_{k=1}\frac{d^l}{d\xi^l}(\frac{c_k}{\xi^k}),
\end{equation}
as $|\xi|\rightarrow +\infty,{\rm Im}\xi\leq0$ (resp. ${\rm Im}\xi\geq0)$ and where $c_k\in\mathbb{C}$ are some constants.\\
When $|\xi|\rightarrow +\infty$ Eq. (\ref{pqow}) means
\begin{align*}
\frac{d^{l}h}{d\xi^l}(\xi)-\sum^{n}_{k=1}\frac{d^l}{d\xi^l}(\frac{c_k}{\xi^k})=o(|\xi|^{-(n+l)}).
\end{align*}
When $|\xi|\rightarrow +\infty$ for a constant $C_{n,l}$, Eq. (\ref{pqow}) also means
\begin{align*}
\frac{d^{l}h}{d\xi^l}(\xi)-\sum^{n}_{k=1}\frac{d^l}{d\xi^l}(\frac{c_k}{\xi^k})\leq C_{n,l}(|\xi|)^{-(n+l+1)}.
\end{align*}

Let $H'$ be the space of all polynomials and $H^-=H^-_0\bigoplus H';~H=H^+\bigoplus H^-.$ Denote by $\pi^+$ (resp. $\pi^-$) the
 projection on $H^+$ (resp. $H^-$). Let $\widetilde H=\{$rational functions having no poles on the real axis$\}$. Then on $\tilde{H}$,
 \begin{equation}\label{b3}
\pi^+h(\xi_0)=\frac{1}{2\pi i}\lim_{u\rightarrow 0^{-}}\int_{\Gamma^+}\frac{h(\xi)}{\xi_0+iu-\xi}d\xi,
\end{equation}
where $\Gamma^+$ is a Jordan closed curve
included ${\rm Im}(\xi)>0$ surrounding all the singularities of $h$ in the upper half-plane and
$\xi_0\in {\bf R}$. In our computations, we only compute $\pi^+h$ for $h$ in $\widetilde{H}$. Similarly, define $\pi'$ on $\tilde{H}$,
\begin{equation}\label{b4}
\pi'h=\frac{1}{2\pi}\int_{\Gamma^+}h(\xi)d\xi.
\end{equation}
So $\pi'(H^-)=0$. For $h\in H\bigcap L^1({\bf R})$, $\pi'h=\frac{1}{2\pi}\int_{{\bf R}}h(v)dv$ and for $h\in H^+\bigcap L^1({\bf R})$, $\pi'h=0$.\\
\indent An operator of order $m\in {\bf Z}$ and type $d$ is a matrix\\
$$\widetilde{A}=\left(\begin{array}{lcr}
  \pi^+P+G  & K  \\
   T  &  \widetilde{S}
\end{array}\right):
\begin{array}{cc}
\   C^{\infty}(M,E_1)\\
 \   \bigoplus\\
 \   C^{\infty}(\partial{M},F_1)
\end{array}
\longrightarrow
\begin{array}{cc}
\   C^{\infty}(M,E_2)\\
\   \bigoplus\\
 \   C^{\infty}(\partial{M},F_2)
\end{array},
$$
where $M$ is a manifold with boundary $\partial M$ and
$E_1,E_2$~ (resp. $F_1,F_2$) are vector bundles over $M~$ (resp. $\partial M
$).~Here,~$P:C^{\infty}_0(\Omega,\overline {E_1})\rightarrow
C^{\infty}(\Omega,\overline {E_2})$ is a classical
pseudodifferential operator of order $m$ on $\Omega$, where
$\Omega$ is a collar neighborhood of $M$ and
$\overline{E_i}|M=E_i~(i=1,2)$. $P$ has an extension:
$~{\cal{E'}}(\Omega,\overline {E_1})\rightarrow
{\cal{D'}}(\Omega,\overline {E_2})$, where
${\cal{E'}}(\Omega,\overline {E_1})~({\cal{D'}}(\Omega,\overline
{E_2}))$ is the dual space of $C^{\infty}(\Omega,\overline
{E_1})~(C^{\infty}_0(\Omega,\overline {E_2}))$. Let
$e^+:C^{\infty}(M,{E_1})\rightarrow{\cal{E'}}(\Omega,\overline
{E_1})$ denote extension by zero from $M$ to $\Omega$ and
$r^+:{\cal{D'}}(\Omega,\overline{E_2})\rightarrow
{\cal{D'}}(\Omega, {E_2})$ denote the restriction from $\Omega$ to
$X$, then define
$$\pi^+P=r^+Pe^+:C^{\infty}(M,{E_1})\rightarrow {\cal{D'}}(\Omega,
{E_2}).$$ In addition, $P$ is supposed to have the
transmission property; this means that, for all $j,k,\alpha$, the
homogeneous component $p_j$ of order $j$ in the asymptotic
expansion of the
symbol $p$ of $P$ in local coordinates near the boundary satisfies:\\
$$\partial^k_{x_n}\partial^\alpha_{\xi'}p_j(x',0,0,+1)=
(-1)^{j-|\alpha|}\partial^k_{x_n}\partial^\alpha_{\xi'}p_j(x',0,0,-1),$$
then $\pi^+P:C^{\infty}(M,{E_1})\rightarrow C^{\infty}(M,{E_2})$
by Theorem 4 in \cite{RS} page 139. Let $G$,$T$ be respectively the singular Green operator
and the trace operator of order $m$ and type $d$. Let $K$ be a
potential operator and $S$ be a classical pseudodifferential
operator of order $m$ along the boundary. Denote by $B^{m,d}$ the collection of all operators of
order $m$
and type $d$,  and $\mathcal{B}$ is the union over all $m$ and $d$.\\
\indent Recall that $B^{m,d}$ is a Fr\'{e}chet space. The composition
of the above operator matrices yields a continuous map:
$B^{m,d}\times B^{m',d'}\rightarrow B^{m+m',{\rm max}\{
m'+d,d'\}}.$ Write $$\widetilde{A}=\left(\begin{array}{lcr}
 \pi^+P+G  & K \\
 T  &  \widetilde{S}
\end{array}\right)
\in B^{m,d},
 \widetilde{A}'=\left(\begin{array}{lcr}
\pi^+P'+G'  & K'  \\
 T'  &  \widetilde{S}'
\end{array} \right)
\in B^{m',d'}.$$\\
 The composition $\widetilde{A}\widetilde{A}'$ is obtained by
multiplication of the matrices (For more details see \cite{SE}). For
example, $\pi^+P\circ G'$ and $G\circ G'$ are singular Green
operators of type $d'$ and
$$\pi^+P\circ\pi^+P'=\pi^+(PP')+L(P,P').$$
Here $PP'$ is the usual
composition of pseudodifferential operators and $L(P,P')$ called
leftover term is a singular Green operator of type $m'+d$. In this case, $P,P'$ are classical pseudo differential operators, in other words $\pi^+P\in \mathcal{B}^{\infty}$ and $\pi^+P'\in \mathcal{B}^{\infty}$ .

Denote by $\mathcal{B}$ the Boutet de Monvel's algebra. We recall that the main theorem in \cite{FGLS,Wa3}.
\begin{thm}\label{th:32}{\rm\cite{FGLS}}{\bf(Fedosov-Golse-Leichtnam-Schrohe)}
 Let $M$ and $\partial M$ be connected, ${\rm dim}M=n\geq3$, and let $\widetilde{S}$ (resp. $\widetilde{S}'$) be the unit sphere about $\xi$ (resp. $\xi'$) and $\sigma(\xi)$ (resp. $\sigma(\xi')$) be the corresponding canonical
$n-1$ (resp. $(n-2)$) volume form.
 Set $\widetilde{A}=\left(\begin{array}{lcr}\pi^+P+G &   K \\
T &  \widetilde{S}    \end{array}\right)$ $\in \mathcal{B}$ , and denote by $p$, $b$ and $s$ the local symbols of $P,G$ and $\widetilde{S}$ respectively.
 Define:
 \begin{align*}
{\rm{\widetilde{Wres}}}(\widetilde{A})&=\int_X\int_{\bf \widetilde{ S}}{\rm{tr}}_E\left[p_{-n}(x,\xi)\right]\sigma(\xi)dx \nonumber\\
&+2\pi\int_ {\partial X}\int_{\bf \widetilde{S}'}\left\{{\rm tr}_E\left[({\rm{tr}}b_{-n})(x',\xi')\right]+{\rm{tr}}
_F\left[s_{1-n}(x',\xi')\right]\right\}\sigma(\xi')dx',
\end{align*}
where ${\rm{\widetilde{Wres}}}$ denotes the noncommutative residue of an operator in the Boutet de Monvel's algebra.\\
Then~~ a) ${\rm \widetilde{Wres}}([\widetilde{A},B])=0 $, for any
$\widetilde{A},B\in\mathcal{B}$;~~ b) It is the unique continuous trace on
$\mathcal{B}/\mathcal{B}^{-\infty}$.
\end{thm}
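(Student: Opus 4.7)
My plan is to prove Theorem \ref{th:32} by establishing the two properties separately, leveraging the decomposition of $\widetilde{\mathrm{Wres}}$ into an interior piece and a boundary piece, and then reducing each property to a local symbol calculation.

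For part (a), the trace property, I would first reduce to checking that $\widetilde{\mathrm{Wres}}([\widetilde A, B])$ vanishes when $\widetilde A, B$ run through the four building blocks of $\mathcal{B}$: truncated pseudodifferential operators $\pi^+ P$, singular Green operators $G$, trace operators $T$ (with their adjoints/potentials $K$), and boundary pseudodifferential operators $\widetilde S$. The interior contribution $\int_M \int_{\widetilde S} \mathrm{tr}_E[p_{-n}(x,\xi)]\sigma(\xi)dx$ vanishes on commutators by the standard Wodzicki argument: writing $[P,P']_{-n}$ in terms of the Poisson-bracket-type expansion from (\ref{1111}) and integrating by parts on $S^*M$ reveals each homogeneous piece as a total derivative, so its fiber integral is zero. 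The leftover term $L(P,P')$ from $\pi^+ P\circ \pi^+ P' = \pi^+(PP')+L(P,P')$, however, feeds into the boundary part and must be tracked carefully.

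The boundary trace property is where the real work sits, and it is the main obstacle. One has to verify that
\begin{equation*}
\int_{\partial M}\int_{\widetilde S'}\bigl\{\mathrm{tr}_E[(\mathrm{tr}\, b_{-n})(x',\xi')]+\mathrm{tr}_F[s_{1-n}(x',\xi')]\bigr\}\sigma(\xi')dx'
\end{equation*}
annihilates the symbol of each commutator type. For $[G,G']$ and mixed expressions involving $K,T,\widetilde S$, this uses the explicit composition rules of Boutet de Monvel's calculus together with the fact that $\pi^+,\pi'$ from (\ref{b3})--(\ref{b4}) intertwine in a specific way with multiplication on $H^\pm$. Concretely, I would expand $b_{-n}$ in terms of the symbols of the two factors using the operator-valued symbol calculus in the normal variable $x_n$, apply integration by parts in $\xi'$ on $\widetilde S'$ for the pseudodifferential parts of the expansion, and use the identity $\pi' h = 0$ for $h \in H^+\cap L^1$ to eliminate spurious terms. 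The cancellation between $\mathrm{tr}_E\, \mathrm{tr}\, b_{-n}$ for commutators involving $\pi^+ P$ (via the leftover $L$) and $\mathrm{tr}_F\, s_{1-n}$ for boundary pseudodifferential commutators is the delicate structural fact one must verify — this is precisely what forces the coefficient $2\pi$ in front of the boundary integral.

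For part (b), uniqueness, the strategy is standard once (a) is established. One shows that any continuous trace $\tau$ on $\mathcal{B}/\mathcal{B}^{-\infty}$ is determined by its values on a cofinal family of operators, and that the quotient $\mathcal{B}/([\mathcal{B},\mathcal{B}]+\mathcal{B}^{-\infty})$ is one-dimensional. This can be done by picking a fiducial elliptic element, using the trace property to reduce an arbitrary operator modulo commutators to a scalar multiple of this fiducial element, and invoking continuity to identify $\tau$ with a scalar multiple of $\widetilde{\mathrm{Wres}}$. The hardest step here is showing the one-dimensionality of the quotient, which in practice uses a partition of unity argument to localize and then the Wodzicki-type result that on $\mathbb{R}^n_+$ the residue density is the only local expression invariant under symbol commutators. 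I would follow the original argument of \cite{FGLS} for this step, since constructing the one-dimensionality by hand requires the full machinery of the calculus.
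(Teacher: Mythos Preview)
The paper does not prove this theorem at all: it is quoted verbatim as a known result from \cite{FGLS} (the original Fedosov--Golse--Leichtnam--Schrohe paper), introduced by the sentence ``We recall that the main theorem in \cite{FGLS,Wa3}.'' There is therefore no proof in the paper to compare your proposal against; the authors simply invoke the statement as background for their boundary computations.

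Your sketch is a reasonable high-level outline of the strategy actually carried out in \cite{FGLS}: reduce the trace property to the building blocks of the Boutet de Monvel calculus, handle the interior part by the classical Wodzicki argument, and then track the boundary contributions (including the leftover term $L(P,P')$) through the operator-valued symbol calculus in the normal variable. The uniqueness part via one-dimensionality of $\mathcal{B}/([\mathcal{B},\mathcal{B}]+\mathcal{B}^{-\infty})$ is also the route taken there. But none of this appears in the present paper, so if your goal was to reproduce or compare with the paper's own argument, there is nothing here---you would need to consult \cite{FGLS} directly.
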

\subsubsection{The boundary term of ${\rm \widetilde{Wres}}[\pi^+(D+c(X)D^{-1}fD)^{-1}\circ \pi^+(D+c(X)D^{-1}fD)^{-1}]$}
In this subsubsection, we want to calculate the boundary term of ${\rm \widetilde{Wres}}[\pi^+(D+c(X)D^{-1}fD)^{-1}\circ \pi^+(D+c(X)D^{-1}fD)^{-1}]$ on manifolds with boundary. Firstly, according to \cite{Wa3}, we get the following definition, which shows the relationship between lower dimensional volumes of spin manifolds and the noncommutative residue on manifolds with boundary.
\begin{defn}\label{def1}{\rm\cite{Wa3} }
Lower dimensional volumes of spin manifolds with boundary are defined by
 \begin{align*}
{\rm Vol}^{(p_1,p_2)}_nM:= \widetilde{{\rm Wres}}[\pi^+D^{-p_1}\circ\pi^+D^{-p_2}],
\end{align*}
\end{defn}
and
\begin{align}
\label{b7}
\widetilde{{\rm Wres}}[\pi^+D^{-p_1}\circ\pi^+D^{-p_2}]=\int_M\int_{|\xi|=1}{\rm
tr}_{\wedge^*T^*M\bigotimes\mathbb{C}}[\sigma_{-n}(D^{-p_1-p_2})]\sigma(\xi)dx+\int_{\partial M}\Phi,
\end{align}
where
\begin{align}
\label{b8}
\Phi&=\int_{|\xi'|=1}\int^{+\infty}_{-\infty}\sum^{\infty}_{j, k=0}\sum\frac{(-i)^{|\alpha|+j+k+1}}{\alpha!(j+k+1)!}
\times {\rm tr}_{\wedge^*T^*M\bigotimes\mathbb{C}}[\partial^j_{x_n}\partial^\alpha_{\xi'}\partial^k_{\xi_n}\sigma^+_{r}(D^{-p_1})(x',0,\xi',\xi_n)
\nonumber\\
&\times\partial^\alpha_{x'}\partial^{j+1}_{\xi_n}\partial^k_{x_n}\sigma_{l}(D^{-p_2})(x',0,\xi',\xi_n)]d\xi_n\sigma(\xi')dx',
\end{align}
and the sum is taken over $r+l-k-|\alpha|-j-1=-n,~~r\leq -p_1,~~l\leq -p_2$.

For any fixed point $x_0\in\partial M$, we choose the normal coordinates
$U$ of $x_0$ in $\partial M$ (not in $M$) and compute $\Phi(x_0)$ in the coordinates $\widetilde{U}=U\times [0,1)\subset M$ and with the
metric $\frac{1}{h(x_n)}g^{\partial M}+dx_n^2.$ The dual metric of $g$ on $\widetilde{U}$ is ${h(x_n)}g^{\partial M}+dx_n^2.$  Write
$g_{ij}=g(\partial_{x_i},\partial_{x_j})$ and $g^{ij}=g(dx_i,dx_j)$, then
\begin{equation*}
[g_{ij}]= \left[\begin{array}{lcr}
  \frac{1}{h(x_n)}[g_{ij}^{\partial M}]  & 0  \\
   0  &  1
\end{array}\right];~~~
[g^{ij}]= \left[\begin{array}{lcr}
  h(x_n)[g^{ij}_{\partial M}]  & 0  \\
   0  &  1
\end{array}\right],
\end{equation*}
and
\begin{equation*}
\partial_{x_s}g_{ij}^{\partial M}(x_0)=0,~~1\leq i,~~j\leq n-1;~~g_{ij}(x_0)=\delta_{ij}.
\end{equation*}
\indent From \cite{Wa3}, we can get three lemmas.
\begin{lem}{\rm \cite{Wa3}}\label{le:32}
With the metric $g$ on $M$ near the boundary, then
\begin{align*}
\partial_{x_j}(|\xi|_{g}^2)(x_0)&=\left\{
       \begin{array}{c}
        0,  ~~~~~~~~~~ ~~~~~~~~~~ ~~~~~~~~~~~~~{\rm if }~j<n, \\[2pt]
       h'(0)|\xi'|^{2}_{g^{\partial M}},~~~~~~~~~~~~~~~~~~~~{\rm if }~j=n,
       \end{array}
    \right. \\
\partial_{x_j}[c(\xi)](x_0)&=\left\{
       \begin{array}{c}
      0,  ~~~~~~~~~~ ~~~~~~~~~~ ~~~~~~~~~~~~~{\rm if }~j<n,\\[2pt]
\partial_{x_n}(c(\xi'))(x_{0}), ~~~~~~~~~~~~~~~~~{\rm if }~j=n,
       \end{array}
    \right.
\end{align*}
where $\xi=\xi'+\xi_{n}dx_{n}$.
\end{lem}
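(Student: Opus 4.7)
The plan is to compute each derivative directly from the explicit block form of the metric and its dual, exploiting two structural facts: (i) the normal coordinate choice on $\partial M$ at $x_0$ kills tangential derivatives of $g^{\partial M}_{ij}$ at $x_0$, and (ii) the warping factor $h$ depends only on $x_n$.

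First I would address the scalar identity $\partial_{x_j}(|\xi|_g^2)$. Using the block decomposition of $[g^{ij}]$ given in the excerpt,
\begin{equation*}
|\xi|_g^2 = h(x_n)\sum_{i,j<n} g^{ij}_{\partial M}(x')\,\xi_i\xi_j + \xi_n^{2}.
\end{equation*}
For a tangential index $j<n$, $\partial_{x_j}$ does not touch $h(x_n)$ or $\xi_n^2$, and $\partial_{x_j}g^{ij}_{\partial M}(x_0)=0$ because the $x'$-coordinates are normal on $\partial M$ at $x_0$; this gives the first case. For $j=n$, $\partial_{x_n}$ acts only on $h$ (since $g^{ij}_{\partial M}$ is independent of $x_n$), producing $h'(x_n)\sum_{i,j<n}g^{ij}_{\partial M}\xi_i\xi_j$; evaluating at $x_0$ with $h(0)=1$ and using $g^{ij}_{\partial M}(x_0)=\delta^{ij}$ yields $h'(0)|\xi'|^2_{g^{\partial M}}$.

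For the Clifford-symbol identity, I would fix a local orthonormal frame $\{\tilde e_i\}$ on $\widetilde U$ with $\tilde e_n=\partial_{x_n}$ (possible since $g_{nn}\equiv 1$ and the $\partial_{x_n}$-direction is $g$-orthogonal to the tangential directions). In this frame the matrices $c(\tilde e^k)$ are constant, and one expands $c(dx_\mu)=\sum_k S^\mu_{\ k}(x)\,c(\tilde e^k)$, where the transition coefficients $S^\mu_{\ k}$ are built out of $g^{ij}$ in the usual way (for instance $S^\mu_{\ k}S^\nu_{\ k}=g^{\mu\nu}$). Consequently $c(\xi)=\sum_{\mu,k}\xi_\mu S^\mu_{\ k}(x)\,c(\tilde e^k)$, and all $x$-dependence of the Clifford symbol sits in the $S^\mu_{\ k}$'s. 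Exactly the same two observations then finish the argument: tangential derivatives $\partial_{x_j}S^\mu_{\ k}(x_0)=0$ for $j<n$ because they are determined by $\partial_{x_j}g^{\mu\nu}(x_0)$, which vanishes by (i); while for $j=n$, the choice $\tilde e_n=\partial_{x_n}$ forces $S^n_{\ k}=\delta^n_{\ k}$ to be constant, so $\partial_{x_n}c(dx_n)(x_0)=0$ and only the tangential components contribute, giving $\partial_{x_n}c(\xi)(x_0)=\partial_{x_n}c(\xi')(x_0)$.

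The calculation is otherwise routine; the main point of care is step (ii) for $c(\xi)$, namely isolating the normal Clifford generator $c(dx_n)$ as a constant so that $\partial_{x_n}$ acts only on the tangential part $c(\xi')$. Once the frame $\tilde e_n=\partial_{x_n}$ is in place and the identity $c(dx_\mu)=S^\mu_{\ k}c(\tilde e^k)$ is recorded, both halves of the lemma follow by applying Leibniz and evaluating at $x_0$ with $g^{ij}(x_0)=\delta^{ij}$ and the vanishing of tangential derivatives of $g^{\partial M}$.
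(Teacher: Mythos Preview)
The paper does not supply its own proof of this lemma; it is quoted verbatim from \cite{Wa3}. Your argument is correct and is exactly the natural one: exploit the block form of $[g^{ij}]$ together with the normal-coordinate condition $\partial_{x_s}g^{\partial M}_{ij}(x_0)=0$ for the scalar identity, and pass to an adapted orthonormal frame with $\tilde e_n=\partial_{x_n}$ (so that $c(dx_n)$ is a constant gamma matrix) for the Clifford identity.

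One small point deserves a sharper justification. Your claim that $\partial_{x_j}S^{\mu}_{\ k}(x_0)=0$ for $j<n$ does not follow from $\partial_{x_j}g^{\mu\nu}(x_0)=0$ alone: differentiating $S^{\mu}_{\ k}S^{\nu}_{\ k}=g^{\mu\nu}$ at $x_0$ (where $S^{\mu}_{\ k}(x_0)=\delta^{\mu}_{\ k}$) only forces the \emph{symmetric} part of $\partial_{x_j}S(x_0)$ to vanish. The statement becomes true once you specify the tangential frame concretely, e.g.\ take $\tilde e_i=\sqrt{h(x_n)}\,e_i^{\partial M}$ with $e_i^{\partial M}$ the Gram--Schmidt orthonormalisation of $\partial_{x_1},\dots,\partial_{x_{n-1}}$ for $g^{\partial M}$; the Gram--Schmidt coefficients are algebraic in $g^{\partial M}_{ij}$, so their first tangential derivatives vanish at $x_0$ by the normal-coordinate hypothesis. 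With that choice your Leibniz argument goes through verbatim and gives both cases of the lemma.
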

\begin{lem}{\rm \cite{Wa3}}\label{le:32}With the metric $g$ on $M$ near the boundary, then
\begin{align*}
\omega_{s,t}(e_i)(x_0)&=\left\{
       \begin{array}{c}
        \omega_{n,i}(e_i)(x_0)=\frac{1}{2}h'(0),  ~~~~~~~~~~ ~~~~~~~~~~~{\rm if }~s=n,t=i,i<n, \\[2pt]
       \omega_{i,n}(e_i)(x_0)=-\frac{1}{2}h'(0),~~~~~~~~~~~~~~~~~~~{\rm if }~s=i,t=n,i<n,\\[2pt]
    \omega_{s,t}(e_i)(x_0)=0,~~~~~~~~~~~~~~~~~~~~~~~~~~~other~cases,~~~~~~~~~
       \end{array}
    \right.
\end{align*}
where $(\omega_{s,t})$ denotes the connection matrix of Levi-Civita connection $\nabla^L$.
\end{lem}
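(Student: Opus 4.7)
The plan is to compute the connection one-forms directly from a well-chosen orthonormal frame adapted to the warped-product structure of the metric near the boundary. Let $\{\tilde{e}_i\}_{i<n}$ be a local orthonormal frame of $(\partial M, g^{\partial M})$ arising from normal coordinates centered at $x_0$ on the boundary, and extend each $\tilde{e}_i$ to $\widetilde{U}=U\times[0,1)$ so that it is independent of $x_n$. Because $g=\frac{1}{h(x_n)}g^{\partial M}+dx_n^2$, the rescaled vector fields
\[
e_i:=\sqrt{h(x_n)}\,\tilde e_i\quad(i<n),\qquad e_n:=\partial_{x_n}
\]
form a local orthonormal frame for $g$ with $e_i(x_0)=\partial_{x_i}$. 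This is the natural frame in which to read off the connection matrix $(\omega_{s,t})$ with respect to the Levi-Civita connection $\nabla^L$.

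Next I compute the Lie brackets at $x_0$. For $i,j<n$, since $h$ depends only on $x_n$, $[e_i,e_j]=h(x_n)[\tilde e_i,\tilde e_j]$, and the choice of normal coordinates on $\partial M$ gives $[\tilde e_i,\tilde e_j](x_0)=0$, so $[e_i,e_j](x_0)=0$. For $i<n$, since $\tilde e_i$ is $x_n$-independent, a direct calculation yields
\[
[e_i,e_n]=-\partial_{x_n}\!\bigl(\sqrt{h(x_n)}\bigr)\,\tilde e_i=-\tfrac{h'(x_n)}{2 h(x_n)}\,e_i,
\]
whence $[e_i,e_n](x_0)=-\tfrac{1}{2}h'(0)\,e_i$. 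These are the only nontrivial brackets at $x_0$.

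I then invoke the Koszul formula for an orthonormal frame, which reduces (since $\langle e_s,e_t\rangle$ is constant) to
\[
2\,\omega_{s,t}(e_i)=\langle[e_i,e_t],e_s\rangle-\langle[e_t,e_s],e_i\rangle+\langle[e_s,e_i],e_t\rangle,
\]
and $\omega_{s,t}=-\omega_{t,s}$ eliminates the diagonal entries. Plugging in the bracket values $(1)$ above and dividing into cases $s,t<n$, $s<n$ with $t=n$, and $s=t=n$, I obtain $\omega_{i,n}(e_i)(x_0)=-\tfrac12h'(0)$ and, by antisymmetry, $\omega_{n,i}(e_i)(x_0)=\tfrac12h'(0)$, while all other components vanish. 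This is exactly the case distinction in the lemma.

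There is no deep obstacle: the statement is essentially a bookkeeping exercise about a warped product. The only care needed is (i) to verify that the normal-coordinate hypothesis on $\partial M$ suffices to kill the tangential brackets at $x_0$ even though the extension off $\partial M$ is only trivial, and (ii) to keep the antisymmetry convention $\omega_{s,t}=-\omega_{t,s}$ consistent with the chosen sign convention $\omega^s_t(e_i)=\langle\nabla_{e_i}^L e_t,e_s\rangle$ so that the two surviving entries appear with the correct opposite signs.
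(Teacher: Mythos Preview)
Your argument is correct: constructing the orthonormal frame $e_i=\sqrt{h(x_n)}\,\tilde e_i$, $e_n=\partial_{x_n}$, computing the Lie brackets at $x_0$, and reading off the connection forms from the Koszul formula for an orthonormal frame is exactly the right computation, and your signs check out. The paper itself gives no proof of this lemma; it is simply quoted from \cite{Wa3}, so there is nothing to compare at the level of argument---you have supplied the standard warped-product calculation that underlies the cited result.
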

\begin{lem}{\rm \cite{Wa3}}\label{lem1} When $i<n,$ then
\begin{align*}
\Gamma_{st}^k(x_0)&=\left\{
       \begin{array}{c}
        \Gamma^n_{ii}(x_0)=\frac{1}{2}h'(0),~~~~~~~~~~ ~~~~~~~~~~~{\rm if }~s=t=i,k=n, \\[2pt]
        \Gamma^i_{ni}(x_0)=-\frac{1}{2}h'(0),~~~~~~~~~~~~~~~~~~~{\rm if }~s=n,t=i,k=i,\\[2pt]
        \Gamma^i_{in}(x_0)=-\frac{1}{2}h'(0),~~~~~~~~~~~~~~~~~~~{\rm if }~s=i,t=n,k=i,\\[2pt]
       \end{array}
    \right.
\end{align*}
in other cases, $\Gamma_{st}^i(x_0)=0$.
\end{lem}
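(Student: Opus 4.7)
The plan is to compute the Christoffel symbols directly from the product-type metric $g=\frac{1}{h(x_n)}g^{\partial M}+dx_n^2$ using the standard formula
\[
\Gamma^k_{st}=\tfrac12 g^{kl}\bigl(\partial_s g_{tl}+\partial_t g_{sl}-\partial_l g_{st}\bigr),
\]
and to evaluate at the boundary point $x_0$. Because $[g^{ij}](x_0)$ is the identity, at $x_0$ this collapses to $\Gamma^k_{st}(x_0)=\tfrac12(\partial_s g_{tk}+\partial_t g_{sk}-\partial_k g_{st})(x_0)$, so the whole proof reduces to identifying which first derivatives of $g_{ij}$ survive at $x_0$.

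First I would enumerate those derivatives. Since the coordinates on $\partial M$ are normal at $x_0$, every tangential derivative satisfies $\partial_{x_s} g^{\partial M}_{ij}(x_0)=0$ for $s<n$. The assumed form of $g$ has no cross terms, so $g_{in}\equiv 0$ for $i<n$, and $g_{nn}\equiv 1$; hence $\partial_{x_k} g_{in}=\partial_{x_k} g_{nn}=0$ identically. The only potentially nonzero first derivative is therefore $\partial_{x_n} g_{ij}$ with $i,j<n$: from $g_{ij}=h(x_n)^{-1}g^{\partial M}_{ij}$ and $h(0)=1$ one reads off $\partial_{x_n} g_{ij}(x_0)=-h'(0)\,\delta_{ij}$.

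Plugging this into the reduced Christoffel formula forces exactly the three listed cases. For $s=t=i<n$ and $k=n$, only the term $-\partial_n g_{ii}(x_0)=h'(0)$ contributes, giving $\Gamma^n_{ii}(x_0)=\tfrac12 h'(0)$. For $s=n$, $t=i<n$, $k=i$, only $\partial_n g_{ii}(x_0)=-h'(0)$ contributes, giving $\Gamma^i_{ni}(x_0)=-\tfrac12 h'(0)$; the symmetric configuration $(s,t,k)=(i,n,i)$ follows from $\Gamma^i_{st}=\Gamma^i_{ts}$. In every remaining index combination either all three derivatives involve tangential directions (and thus vanish by the normal-coordinate choice on $\partial M$) or they hit the identically constant entries $g_{nn},g_{in}$, giving $\Gamma^k_{st}(x_0)=0$.

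The proof has no real obstacle beyond careful index bookkeeping; the one structural point I would \emph{emphasize} is that the absence of cross terms $dx_i\,dx_n$ together with the constancy of $g_{nn}$ is exactly what prevents any Christoffel symbol with two or three $n$-indices from being nonzero at $x_0$. This is why only the three listed patterns appear, with common magnitude $\tfrac12 h'(0)$ and signs dictated by whether the surviving derivative enters with a plus or a minus in the Christoffel combination.
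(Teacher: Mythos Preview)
Your computation is correct and is the standard direct verification from the Christoffel formula. Note that the paper does not actually prove this lemma: it is quoted from \cite{Wa3} without proof, so there is no ``paper's own proof'' to compare against. Your argument is precisely the kind of elementary derivation one would expect to find in the cited reference, and nothing more is needed.
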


 On manifolds with boundary, $X$ can be expressed as $X=X^T+X_n\partial_{x_n}$. Thus, some symbols for the first type pseudo-differential perturbation of the Dirac operator are given by (\ref{2222}) and (\ref{3355}). Then we get the following lemmas.
\begin{lem}\label{lem2} Some symbols of positive order for the first type pseudo-differential perturbation of the Dirac operator are as follows.
\begin{align*}
\sigma_1(D+c(X)D^{-1}fD)&=\sigma_1(D)=ic(\xi); \nonumber\\
\sigma_0(D+c(X)D^{-1}fD)&=\sigma_0(D)+c(X)f=-\frac{1}{4}\sum_{i,s,t}\omega_{s,t}(e_i)c(e_i)c(e_s)c(e_t)+c(X)f.
\end{align*}
\end{lem}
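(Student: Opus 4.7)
The plan is to exploit the identity $A = D + c(X)D^{-1}fD = D + c(X)f - c(X)D^{-1}c(df)$ already established at the start of Section \ref{section:2}. This writes $A$ as a sum of three pseudo-differential operators of pure orders $1$, $0$, and $-1$ respectively: $D$ is first-order, $c(X)f$ is a zero-order multiplication (Clifford action by $X$ followed by scalar multiplication by $f\in C^{\infty}(M)$), and $c(X)D^{-1}c(df)$ is the composition of two zero-order Clifford multiplications with the order $-1$ parametrix $D^{-1}$, hence of order $-1$ by the standard composition rule for symbol orders. The positive-order symbols of $A$ therefore can receive contributions only from the first two summands.

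For the principal symbol, neither of the zero- or negative-order pieces enters, so $\sigma_{1}(A) = \sigma_{1}(D)$. Reading off the leading part of $D = ic(e_{i})(e_{i}+\sigma_{i})$ in a local orthonormal frame, where the derivation $e_{i}$ has symbol $i\xi_{i}$ only at the principal-order level and $\sigma_{i}$ is a zero-order spin-connection piece, immediately yields $\sigma_{1}(D)(x,\xi)=ic(e_{i})\xi_{i}=ic(\xi)$. This proves the first identity.

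For the subleading symbol, the order $-1$ term $c(X)D^{-1}c(df)$ is invisible, and the zero-order multiplication $c(X)f$ contributes itself as $\sigma_{0}$-symbol, so $\sigma_{0}(A)=\sigma_{0}(D)+c(X)f$. To finish I would substitute the classical expression $\sigma_{0}(D)=-\frac{1}{4}\sum_{i,s,t}\omega_{s,t}(e_{i})c(e_{i})c(e_{s})c(e_{t})$, a standard formula on spin manifolds that arises from the spin-connection expansion $\sigma_{i}=\frac{1}{4}\sum_{s,t}\omega_{s,t}(e_{i})c(e_{s})c(e_{t})$ combined with the Clifford conventions fixed in this paper (see \cite{Wa3}).

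The only genuine thing to verify is that no hidden order $0$ piece leaks out of $c(X)D^{-1}c(df)$, which is immediate from the order grading $0+(-1)+0=-1$ of the composition. Beyond that, the lemma is pure bookkeeping: the entire content is that the pseudo-differential perturbation $c(X)D^{-1}fD$ is congruent, modulo order $-1$, to the zero-order multiplication $c(X)f$, with the Dirac operator's own positive-order symbols supplying the rest.
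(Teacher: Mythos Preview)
Your argument is correct and matches the paper's implicit reasoning: the paper states this lemma without proof, simply pointing to the decomposition $A=D+c(X)f-c(X)D^{-1}c(df)$ already derived at the start of Section~\ref{section:2} together with the symbol computations in (\ref{2222}) and (\ref{3355}), and your proposal spells out exactly that order-counting argument. One small slip: you say the derivation $e_i$ has symbol $i\xi_i$ but then compute $\sigma_1(D)=ic(e_i)\xi_i$; under the paper's convention (consistent with (\ref{1111}) and $\sigma_2^{A^2}=|\xi|^2$) the end result $\sigma_1(D)=ic(\xi)$ is the one to keep, so just drop the extra $i$ in the intermediate remark.
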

Then by the composition formula of pseudo-differential operators, we have
\begin{lem}\label{lem3} Some symbols of negative order for the first type pseudo-differential perturbation of the Dirac operator are as follows.
\begin{align*}
\sigma_{-1}({D+c(X)D^{-1}fD})^{-1}&=\frac{ic(\xi)}{|\xi|^2};\nonumber\\
\sigma_{-2}({D+c(X)D^{-1}fD})^{-1}&=\frac{c(\xi)\sigma_{0}({D+c(X)D^{-1}fD})c(\xi)}{|\xi|^4}+\frac{c(\xi)}{|\xi|^6}\sum_jc(dx_j)
\Big[\partial_{x_j}(c(\xi))|\xi|^2-c(\xi)\partial_{x_j}(|\xi|^2)\Big].
\end{align*}
\end{lem}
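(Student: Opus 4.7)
The plan is to apply the standard symbol calculus formula (\ref{1111}) to the identity $A\cdot A^{-1}=I$, where $A=D+c(X)D^{-1}fD$, and extract recursion relations for $\sigma_{-k}(A^{-1})$ order by order. Since Lemma \ref{lem2} tells us the positive-order symbols of $A$ are $\sigma_1(A)=ic(\xi)$ and $\sigma_0(A)=\sigma_0(D)+c(X)f$, and since $A$ is a classical pseudodifferential operator of order $1$, the symbol of $A^{-1}$ expands as $\sigma(A^{-1})=\sigma_{-1}(A^{-1})+\sigma_{-2}(A^{-1})+\cdots$, and I can read off each piece from the vanishing of the composed symbol in homogeneous degrees $0,-1,-2,\dots$

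First I would isolate the degree $0$ contribution in $\sigma(A\circ A^{-1})=1$, which gives $\sigma_1(A)\,\sigma_{-1}(A^{-1})=1$, so that $\sigma_{-1}(A^{-1})=\sigma_1(A)^{-1}=(ic(\xi))^{-1}$. Using the Clifford relation $c(\xi)^2=-|\xi|^2$, inversion gives $(ic(\xi))^{-1}=\frac{ic(\xi)}{|\xi|^2}$, which is the first formula in the lemma.

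Next, I would collect the degree $-1$ part of $\sigma(A\circ A^{-1})$, which by (\ref{1111}) reads
\begin{equation*}
\sigma_1(A)\,\sigma_{-2}(A^{-1})+\sigma_0(A)\,\sigma_{-1}(A^{-1})-i\sum_j\partial_{\xi_j}\sigma_1(A)\cdot\partial_{x_j}\sigma_{-1}(A^{-1})=0.
\end{equation*}
Solving for $\sigma_{-2}(A^{-1})$, multiplying through on the left by $\sigma_1(A)^{-1}=\frac{ic(\xi)}{|\xi|^2}$, and substituting $\partial_{\xi_j}\sigma_1(A)=ic(dx_j)$ together with $\partial_{x_j}\sigma_{-1}(A^{-1})=\frac{i[\partial_{x_j}c(\xi)\,|\xi|^2-c(\xi)\,\partial_{x_j}(|\xi|^2)]}{|\xi|^4}$ will produce both advertised pieces: the $\sigma_0(A)$-term contracts via $c(\xi)\,\sigma_0(A)\,c(\xi)/|\xi|^4$ after using $i\cdot i=-1$, while the derivative term produces the sum $\frac{c(\xi)}{|\xi|^6}\sum_j c(dx_j)\bigl[\partial_{x_j}(c(\xi))|\xi|^2-c(\xi)\partial_{x_j}(|\xi|^2)\bigr]$.

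The main obstacle is nothing conceptual but rather a bookkeeping one: tracking the factors of $i$ coming from the $(-i)^{|\alpha|}/\alpha!$ in (\ref{1111}), the $i$ in $\sigma_1(A)=ic(\xi)$, and the $i$ in $\sigma_{-1}(A^{-1})$, and verifying the cancellations using $c(\xi)^2=-|\xi|^2$. Care is also needed because the Clifford symbols do not commute, so the order of multiplications must be preserved throughout, and one must multiply $\sigma_1(A)^{-1}$ on the left (not the right) when inverting the leading-symbol recurrence.
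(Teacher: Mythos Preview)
Your proposal is correct and follows exactly the route the paper indicates: the paper simply states ``by the composition formula of pseudo-differential operators'' before quoting the lemma, and your plan is precisely to expand $\sigma(A\circ A^{-1})=1$ via (\ref{1111}) in homogeneous degrees $0$ and $-1$ to solve for $\sigma_{-1}(A^{-1})$ and $\sigma_{-2}(A^{-1})$. Your sign and $i$-bookkeeping checks out, including the observation that $\sigma_{-1}(A)$ plays no role at this stage because it would pair with the nonexistent $\sigma_0(A^{-1})$.
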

Further, by (\ref{b7}) and (\ref{b8}), we obtain lower dimensional volumes of spin manifolds with boundary for the operator $D+c(X)D^{-1}fD$
\begin{align}
\label{b14}
&\widetilde{{\rm Wres}}[\pi^+(D+c(X)D^{-1}fD)^{-1}\circ \pi^+(D+c(X)D^{-1}fD)^{-1}]\nonumber\\
&=\int_M\int_{|\xi|=1}{\rm
tr}_{S(TM)\bigotimes\mathbb{C}}[\sigma_{-4}(D+c(X)D^{-1}fD)^{-2}]\sigma(\xi)dx+\int_{\partial M}\Phi,
\end{align}
where
\begin{align}
\label{b15}
\Phi &=\int_{|\xi'|=1}\int^{+\infty}_{-\infty}\sum^{\infty}_{j, k=0}\sum\frac{(-i)^{|\alpha|+j+k+1}}{\alpha!(j+k+1)!}
\times {\rm
tr}_{S(TM)\bigotimes\mathbb{C}}[\partial^j_{x_n}\partial^\alpha_{\xi'}\partial^k_{\xi_n}\sigma^+_{r}({D+c(X)D^{-1}fD})^{-1}(x',0,\xi',\xi_n)
\nonumber\\
&\times\partial^\alpha_{x'}\partial^{j+1}_{\xi_n}\partial^k_{x_n}\sigma_{l}({D+c(X)D^{-1}fD})^{-1}(x',0,\xi',\xi_n)]d\xi_n\sigma(\xi')dx',
\end{align}
and the sum is taken over $r+l-k-j-|\alpha|=-3,~~r\leq -1,~l\leq-1$.

Since $[\sigma_{-n}(D^{-p_1-p_2})]|_M$ has the same expression as $\sigma_{-n}(D^{-p_1-p_2})$ in the case of manifolds without
boundary, so locally we can compute the first term by \cite{Ka}, \cite{KW}, \cite{Po}, \cite{Wa3}. Therefore, the interior term of $\widetilde{{\rm Wres}}[\pi^+(D+c(X)D^{-1}fD)^{-1}\circ \pi^+(D+c(X)D^{-1}fD)^{-1}]$ has been given in Theorem \ref{thm2}. After computing the boundary term of $\widetilde{{\rm Wres}}[\pi^+(D+c(X)D^{-1}fD)^{-1}\circ \pi^+(D+c(X)D^{-1}fD)^{-1}]$, thai is $\int_{\partial M} \Phi$, we get the following theorem.
\begin{thm}\label{thmb1}
Let $M$ be a $4$-dimensional oriented
compact spin manifold with boundary, then we get the Kastler-Kalau-Walze type theorem of the operator $D+c(X)D^{-1}fD$
\begin{align*}
&\widetilde{{\rm Wres}}[\pi^+({D+c(X)D^{-1}fD})^{-1}\circ\pi^+({D+c(X)D^{-1}fD})^{-1}]\nonumber\\
&=4\int_{M}\bigg(2\pi^2div_M(fX)+2\pi^2X(df)-2\pi^2|X||df|+\frac{1}{12}s\bigg)d{\rm Vol_{M}}.
\end{align*}
In particular, the boundary term vanishes.
\end{thm}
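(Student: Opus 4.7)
By the decomposition (\ref{b14}), the noncommutative residue splits into an interior term and a boundary term $\int_{\partial M}\Phi$. The interior term has already been identified with the right-hand side of the statement in Theorem~\ref{thm2}, so the remaining task is to prove $\int_{\partial M}\Phi=0$.

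To compute $\Phi$ from (\ref{b15}), I first enumerate the finitely many index tuples $(r,l,j,k,\alpha)$ satisfying $r+l-j-k-|\alpha|=-3$ with $r,l\leq -1$: the case $(r,l)=(-1,-1)$ with $j+k+|\alpha|=1$ (three sub-cases) and the mixed-order cases $(r,l)=(-1,-2)$ and $(r,l)=(-2,-1)$ with $j=k=|\alpha|=0$. By Lemma~\ref{lem3}, $\sigma_{-1}((D+c(X)D^{-1}fD)^{-1})$ coincides with $\sigma_{-1}(D^{-1})$, whereas $\sigma_{-2}((D+c(X)D^{-1}fD)^{-1})$ differs from $\sigma_{-2}(D^{-1})$ only by the correction $R(\xi)=\frac{c(\xi)c(X)fc(\xi)}{|\xi|^4}$ coming from the extra $c(X)f$ term in $\sigma_0$. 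Every contribution to $\Phi$ therefore splits into a ``pure Dirac'' piece plus a piece involving $R$. The pure Dirac total is already known to vanish by \cite{Wa3}, so the problem reduces to showing that the $R$-contributions arising in the two mixed-order cases cancel.

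Concretely, I need to evaluate
\begin{align*}
\Phi_R^{(1)}&=-i\int_{|\xi'|=1}\int_{-\infty}^{+\infty}{\rm tr}\big[\pi^+_{\xi_n}\sigma_{-1}(D^{-1})\cdot\partial_{\xi_n}R\big](x_0)\,d\xi_n\,\sigma(\xi'),\\
\Phi_R^{(2)}&=-i\int_{|\xi'|=1}\int_{-\infty}^{+\infty}{\rm tr}\big[\pi^+_{\xi_n}R\cdot\partial_{\xi_n}\sigma_{-1}(D^{-1})\big](x_0)\,d\xi_n\,\sigma(\xi')
\end{align*}
in normal coordinates at a fixed $x_0\in\partial M$, where $g_{ij}(x_0)=\delta_{ij}$ and $|\xi|^2=1+\xi_n^2$. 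Using the Clifford identity $c(\xi)c(X)c(\xi)=-2\langle X,\xi\rangle c(\xi)+|\xi|^2 c(X)$, one rewrites $R$ as an explicit $\xi_n$-rational function; the projection $\pi^+_{\xi_n}$ is then reduced to residues at the pole $\xi_n=i$ via partial fractions (as in (\ref{b3})), the $\xi_n$-integral is evaluated by residue calculus, and the resulting $\xi'$-integrand is simplified using ${\rm tr}(c(e_i)c(e_j))=-4\delta_{ij}$ and the vanishing of traces of odd products of Clifford matrices. Decomposing $X=X^T+X^n\partial_{x_n}$ along the boundary lets the tangential and normal contributions be treated separately.

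The main obstacle is the bookkeeping of this final step: one must track the tangential and normal components of $X$ across the $\pi^+$-projection and verify that the two residues $\Phi_R^{(1)}$ and $\Phi_R^{(2)}$ cancel rather than add. The cancellation reflects the symmetric appearance of the two factors of $\pi^+(D+c(X)D^{-1}fD)^{-1}$ in the composition $\widetilde{{\rm Wres}}[\pi^+A^{-1}\circ\pi^+A^{-1}]$; however, making the symmetry manifest at the symbol level requires pairing terms from the two integrals with care, since $\pi^+$ does not commute with cyclic permutation under the trace, so the matching of tangential-tangential, tangential-normal and normal-normal pieces must be carried out explicitly.
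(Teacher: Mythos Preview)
Your proposal is correct and follows essentially the same route as the paper: the same five index tuples are enumerated, the same symbol formulas from Lemmas~\ref{lem2}--\ref{lem3} are used, and the $\xi_n$-integrals are evaluated by residues at $\xi_n=i$, yielding the $R$-contributions $\mp fX_n\pi\Omega_3\,dx'$ that cancel. The only difference is organizational: you invoke \cite{Wa3} to dispatch the pure-Dirac pieces in one stroke, whereas the paper recomputes them explicitly (obtaining $\Phi_1=0$, $\Phi_2+\Phi_3=0$, and the $\pm\tfrac{9}{8}h'(0)\pi\Omega_3$ parts of $\Phi_4,\Phi_5$ cancelling), so your version is slightly more economical but otherwise identical.
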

\begin{proof}
\indent When $n=4$, the sum is taken over $
r+l-k-j-|\alpha|=-3,~~r\leq -1,~~l\leq-1,$ then we have the following five cases:
~\\
\noindent  {\bf case a-I)}~$r=-1,~l=-1,~k=j=0,~|\alpha|=1$.\\
\noindent By (\ref{b15}), we get
\begin{align*}
\Phi_1&=-\int_{|\xi'|=1}\int^{+\infty}_{-\infty}\sum_{|\alpha|=1}
 {\rm tr}[\partial^\alpha_{\xi'}\pi^+_{\xi_n}\sigma_{-1}({D+c(X)D^{-1}fD})^{-1}\nonumber\\
 &\times
 \partial^\alpha_{x'}\partial_{\xi_n}\sigma_{-1}({D+c(X)D^{-1}fD})^{-1}](x_0)d\xi_n\sigma(\xi')dx'.
\end{align*}
By Lemma \ref{le:32}, for $i<n$, then
\begin{align*}
\partial_{x_i}\left(\frac{ic(\xi)}{|\xi|^2}\right)(x_0)=
\frac{i\partial_{x_i}[c(\xi)](x_0)}{|\xi|^2}
-\frac{ic(\xi)\partial_{x_i}(|\xi|^2)(x_0)}{|\xi|^4}=0,
\end{align*}
\noindent so $\Phi_1=0$.\\
 \noindent  {\bf case a-II)}~$r=-1,~l=-1,~k=|\alpha|=0,~j=1$.\\
\noindent By (\ref{b15}), we get
\begin{align*}
\Phi_2&=-\frac{1}{2}\int_{|\xi'|=1}\int^{+\infty}_{-\infty} {\rm
tr} [\partial_{x_n}\pi^+_{\xi_n}\sigma_{-1}({D+c(X)D^{-1}fD})^{-1}\nonumber\\
&\times
\partial_{\xi_n}^2\sigma_{-1}({D+c(X)D^{-1}fD})^{-1}](x_0)d\xi_n\sigma(\xi')dx'.
\end{align*}
By Lemma \ref{lem3} and taking further derivatives, we have
\begin{align*}\partial^2_{\xi_n}\sigma_{-1}({D+c(X)D^{-1}fD})^{-1}(x_0)=i\left(-\frac{6\xi_nc(dx_n)+2c(\xi')}
{|\xi|^4}+\frac{8\xi_n^2c(\xi)}{|\xi|^6}\right);
\end{align*}
\begin{align*}
\partial_{x_n}\sigma_{-1}({D+c(X)D^{-1}fD})^{-1}(x_0)=\frac{i\partial_{x_n}c(\xi')(x_0)}{|\xi|^2}-\frac{ic(\xi)|\xi'|^2h'(0)}{|\xi|^4}.
\end{align*}
\noindent By the relation of the Clifford action and ${\rm tr}{ab}={\rm tr }{ba}$, we have the equalities:
\begin{align*}
&{\rm tr}[c(\xi')c(dx_n)]=0;~~{\rm tr}[c(dx_n)^2]=-4;~~{\rm tr}[c(\xi')^2](x_0)|_{|\xi'|=1}=-4;\nonumber\\
&{\rm tr}[\partial_{x_n}c(\xi')c(dx_n)]=0;~~{\rm tr}[\partial_{x_n}c(\xi')c(\xi')](x_0)|_{|\xi'|=1}=-2h'(0).
\end{align*}
Thus
\begin{align*}
\Phi_2&=-\int_{|\xi'|=1}\int^{+\infty}_{-\infty}\frac{ih'(0)(\xi_n-i)^2}
{(\xi_n-i)^4(\xi_n+i)^3}d\xi_n\sigma(\xi')dx'\nonumber\\
&=-ih'(0)\Omega_3\int_{\Gamma^+}\frac{1}{(\xi_n-i)^2(\xi_n+i)^3}d\xi_ndx'\nonumber\\
&=-ih'(0)\Omega_32\pi i\bigg[\frac{1}{(\xi_n+i)^3}\bigg]^{(1)}\bigg|_{\xi_n=i}dx'\nonumber\\
&=-\frac{3}{8}\pi h'(0)\Omega_3dx',
\end{align*}
where ${\rm \Omega_{3}}$ is the canonical volume of $S^{2}.$\\
\noindent  {\bf case a-III)}~$r=-1,~l=-1,~j=|\alpha|=0,~k=1$.\\
\noindent By (\ref{b15}), we get
\begin{align*}
\Phi_3&=-\frac{1}{2}\int_{|\xi'|=1}\int^{+\infty}_{-\infty}
{\rm tr} [\partial_{\xi_n}\pi^+_{\xi_n}\sigma_{-1}({D+c(X)D^{-1}fD})^{-1}\\
&\times
\partial_{\xi_n}\partial_{x_n}\sigma_{-1}({D+c(X)D^{-1}fD})^{-1}](x_0)d\xi_n\sigma(\xi')dx'.
\end{align*}
By Lemma \ref{lem3} and taking further derivatives,, we have
\begin{align*}
\partial_{\xi_n}\partial_{x_n}\sigma_{-1}({D+c(X)D^{-1}fD})^{-1}(x_0)|_{|\xi'|=1}&=-ih'(0)\left[\frac{c(dx_n)}{|\xi|^4}-4\xi_n\frac{c(\xi')
+\xi_nc(dx_n)}{|\xi|^6}\right]\\
&-\frac{2\xi_ni\partial_{x_n}c(\xi')(x_0)}{|\xi|^4};
\end{align*}
\begin{align*}
\partial_{\xi_n}\pi^+_{\xi_n}\sigma_{-1}({D+c(X)D^{-1}fD}^{-1})(x_0)|_{|\xi'|=1}=-\frac{c(\xi')+ic(dx_n)}{2(\xi_n-i)^2}.
\end{align*}
Similar to {\rm case~a-II)}, we have
\begin{align*}
{\rm tr}\left\{\frac{c(\xi')+ic(dx_n)}{2(\xi_n-i)^2}\times
ih'(0)\left[\frac{c(dx_n)}{|\xi|^4}-4\xi_n\frac{c(\xi')+\xi_nc(dx_n)}{|\xi|^6}\right]\right\}
=2h'(0)\frac{i-3\xi_n}{(\xi_n-i)^4(\xi_n+i)^3}
\end{align*}
and
\begin{align*}
{\rm tr}\left[\frac{c(\xi')+ic(dx_n)}{2(\xi_n-i)^2}\times
\frac{2\xi_ni\partial_{x_n}c(\xi')(x_0)}{|\xi|^4}\right]
=\frac{-2ih'(0)\xi_n}{(\xi_n-i)^4(\xi_n+i)^2}.
\end{align*}
Then we obtain the following result
\begin{align*}
\Phi_3&=-\int_{|\xi'|=1}\int^{+\infty}_{-\infty}\frac{h'(0)(i-3\xi_n)}
{(\xi_n-i)^4(\xi_n+i)^3}d\xi_n\sigma(\xi')dx'
-\int_{|\xi'|=1}\int^{+\infty}_{-\infty}\frac{h'(0)i\xi_n}
{(\xi_n-i)^4(\xi_n+i)^2}d\xi_n\sigma(\xi')dx'\nonumber\\
&=-h'(0)\Omega_3\frac{2\pi i}{3!}\left[\frac{(i-3\xi_n)}{(\xi_n+i)^3}\right]^{(3)}\bigg|_{\xi_n=i}dx'+h'(0)\Omega_3\frac{2\pi i}{3!}\left[\frac{i\xi_n}{(\xi_n+i)^2}\right]^{(3)}\bigg|_{\xi_n=i}dx'\nonumber\\
&=\frac{3}{8}\pi h'(0)\Omega_3dx'.
\end{align*}
\noindent  {\bf case a-IV)}~$r=-2,~l=-1,~k=j=|\alpha|=0$.\\
\noindent By (\ref{b15}), we get
\begin{align*}
\Phi_4&=-i\int_{|\xi'|=1}\int^{+\infty}_{-\infty}{\rm tr} [\pi^+_{\xi_n}\sigma_{-2}({D+c(X)D^{-1}fD})^{-1}\times
\partial_{\xi_n}\sigma_{-1}({D+c(X)D^{-1}fD})^{-1}](x_0)d\xi_n\sigma(\xi')dx'.
\end{align*}
 By Lemma \ref{lem3}, we have
\begin{align*}
&\sigma_{-2}({D+c(X)D^{-1}fD})^{-1}(x_0)\nonumber\\
&=\frac{c(\xi)\sigma_{0}(D+c(X)D^{-1}fD)(x_0)c(\xi)}{|\xi|^4}+\frac{c(\xi)}{|\xi|^6}c(dx_n)
[\partial_{x_n}[c(\xi')](x_0)|\xi|^2-c(\xi)h'(0)|\xi|^2_{\partial
M}].
\end{align*}
Denote $Q(x_0)=-\frac{1}{4}\sum_{s,t,i}\omega_{s,t}(e_i)(x_{0})c(e_i)c(e_s)c(e_t),$ then by (\ref{b3}) and (\ref{b4}), we get
\begin{align*}
\pi^+_{\xi_n}\sigma_{-2}(D+c(X)D^{-1}fD)^{-1}|_{|\xi'|=1}&=\pi^+_{\xi_n}\Big[\frac{c(\xi)Q(x_0)c(\xi)}{(1+\xi_n^2)^2}\Big]+\pi^+_{\xi_n}
\Big[\frac{c(\xi)c(X)fc(\xi)}{(1+\xi_n^2)^2}\Big]
\nonumber\\
&+\pi^+_{\xi_n}\Big[\frac{c(\xi)c(dx_n)\partial_{x_n}[c(\xi')](x_0)}{(1+\xi_n^2)^2}-h'(0)\frac{c(\xi)c(dx_n)c(\xi)}{(1+\xi_n^{2})^3}\Big]\nonumber\\
&:=C_1-C_2+C_3,
\end{align*}
where
\begin{align}\label{53}
C_1&=\frac{-1}{4(\xi_n-i)^2}[(2+i\xi_n)c(\xi')Q_0^{2}(x_0)c(\xi')+i\xi_nc(dx_n)Q_0^{2}(x_0)c(dx_n)\nonumber\\
&+(2+i\xi_n)c(\xi')c(dx_n)\partial_{x_n}c(\xi')+ic(dx_n)Q_0^{2}(x_0)c(\xi')
+ic(\xi')Q_0^{2}(x_0)c(dx_n)-i\partial_{x_n}c(\xi')],
\end{align}
\begin{align}\label{54}
C_2&=\frac{h'(0)}{2}\left[\frac{c(dx_n)}{4i(\xi_n-i)}+\frac{c(dx_n)-ic(\xi')}{8(\xi_n-i)^2}
+\frac{3\xi_n-7i}{8(\xi_n-i)^3}[ic(\xi')-c(dx_n)]\right],
\end{align}
and
\begin{align*}
C_3&=\frac{2+i\xi_n}{4(\xi_n-i)^2}fc(\xi')c(X)c(\xi')+\frac{i}{4(\xi_n-i)^2}fc(\xi')c(X)c(dx_n)+\frac{i}{4(\xi_n-i)^2}fc(dx_n)c(X)c(\xi')\nonumber\\
&+\frac{i\xi_n}{4(\xi_n-i)^2}fc(dx_n)c(X)c(dx_n).
\end{align*}
Taking derivatives, we have
\begin{align}\label{50}
\partial_{\xi_n}\sigma_{-1}(D+c(X)D^{-1}fD)^{-1}=i\left[\frac{c(dx_n)}{1+\xi_n^2}-\frac{2\xi_nc(\xi')+2\xi_n^2c(dx_n)}{(1+\xi_n^2)^2}\right].
\end{align}
Moreover, by the relation of the Clifford action and ${\rm tr}{ab}={\rm tr }{ba}$, we have the equalities:
\begin{align*}
&{\rm tr}[c(\xi')c(X)c(\xi')c(dx_n)]=-4X_n;~~
{\rm tr}[c(\xi')c(X)c(\xi')c(\xi')]=4g(X,\xi');\nonumber\\
&{\rm tr }[c(dx_n)c(X)c(dx_n)c(dx_n)]=4X_n;
~~{\rm tr}[c(dx_n)c(X)c(\xi')c(\xi')c(dx_n)]=4g(X,\xi').
\end{align*}
By (\ref{53}) and (\ref{50}), we have
\begin{align*}{\rm tr }[C_1\times\partial_{\xi_n}\sigma_{-1}(D+c(X)D^{-1}fD)^{-1}]|_{|\xi'|=1}=
\frac{3ih'(0)}{2(\xi_n-i)^2(1+\xi_n^2)^2}+h'(0)\frac{\xi_n^2-i\xi_n-2}{2(\xi_n-i)(1+\xi_n^2)^2}.
\end{align*}
By (\ref{54}) and (\ref{50}), we have
\begin{align*}{\rm tr }[C_2\times\partial_{\xi_n}\sigma_{-1}(D+c(X)D^{-1}fD)^{-1}]|_{|\xi'|=1}
&=2ih'(0)\frac{-i\xi_n^2-\xi_n+4i}{4(\xi_n-i)^3(\xi_n+i)^2};
\end{align*}
\begin{align*}{\rm tr }[C_3\times\partial_{\xi_n}\sigma_{-1}(D+c(X)D^{-1}fD)^{-1}]|_{|\xi'|=1}
&=\frac{2i}{(\xi_n-i)^3(\xi_n+i)}fX_n-4\frac{\xi_n+i\xi_n^2}{(\xi_n-i)^4(\xi_n+i)^2}fX_n\nonumber\\
&+\frac{2}{(\xi_n-i)^3(\xi_n+i)}fg(X,\xi')+\frac{4i\xi_n-\xi_n^2}{(\xi_n-i)^4(\xi_n+i)^2}fg(X,\xi').
\end{align*}
When $i<n,~\int_{|\xi'|=1}\xi_{i_{1}}\xi_{i_{2}}\cdots\xi_{i_{2d+1}}\sigma(\xi')=0$,
so $g(X,\xi')$ has no contribution for computing {\rm case~a-IV)}. Then, we have
\begin{align*}
&-i\int_{|\xi'|=1}\int^{+\infty}_{-\infty}{\rm tr} [(C_1-C_2)\times
\partial_{\xi_n}\sigma_{-1}(D+c(X)D^{-1}fD)^{-1}](x_0)d\xi_n\sigma(\xi')dx'\nonumber\\
&=\Omega_3\int_{\Gamma^+}\frac{3h'(0)(\xi_n-i)+ih'(0)}{2(\xi_n-i)^3(\xi_n+i)^2}d\xi_ndx'\nonumber\\
&=\frac{9}{8}\pi h'(0)\Omega_3dx',
\end{align*}
and
\begin{align*}
&-i\int_{|\xi'|=1}\int^{+\infty}_{-\infty}{\rm tr} [C_3\times
\partial_{\xi_n}\sigma_{-1}(D+c(X)D^{-1}fD)^{-1}](x_0)d\xi_n\sigma(\xi')dx'\nonumber\\
&=-i\int_{|\xi'|=1}\int^{+\infty}_{-\infty}\left(\frac{2i}{(\xi_n-i)^3(\xi_n+i)}fX_n-4\frac{\xi_n+i\xi_n^2}{(\xi_n-i)^4(\xi_n+i)^2}fX_n\right)d\xi_n\sigma(\xi')dx'\nonumber\\
&=f\Omega_3X_n\int_{\Gamma^+}\frac{2}{(\xi_n-i)^3(\xi_n+i)}d\xi_ndx'+4if\Omega_3X_n\int_{\Gamma^+}\frac{\xi_n+i\xi_n^2}{(\xi_n-i)^4(\xi_n+i)^2}d\xi_ndx'\nonumber\\
&=f\Omega_3X_n\frac{2\pi i}{2!}\bigg[\frac{2}{(\xi_n+i)}\bigg]^{(2)}\bigg|_{\xi_n=i}dx'+4if\Omega_3X_n\frac{2\pi i}{3!}\bigg[\frac{\xi_n+i\xi_n^2}{(\xi_n+i)^2}\bigg]^{(3)}\bigg|_{\xi_n=i}dx'\nonumber\\
&=-fX_n\pi\Omega_3dx'.
\end{align*}
Finally, we get
$$
\Phi_4=\bigg(\frac{9}{8} h'(0)-fX_n\bigg)\pi\Omega_3dx'.
$$
\noindent {\bf  case a-V)}~$r=-1,~l=-2,~k=j=|\alpha|=0$.\\
By (\ref{b15}), we get
\begin{align*}
\Phi_5=-i\int_{|\xi'|=1}\int^{+\infty}_{-\infty}{\rm tr} [\pi^+_{\xi_n}\sigma_{-1}(D+c(X)D^{-1}fD)^{-1}\times
\partial_{\xi_n}\sigma_{-2}(D+c(X)D^{-1}fD)^{-1}](x_0)d\xi_n\sigma(\xi')dx'.
\end{align*}
By (\ref{b3}), (\ref{b4}) and Lemma \ref{lem3}, we have
\begin{align}\label{62}
\pi^+_{\xi_n}\sigma_{-1}(D+c(X)D^{-1}fD)^{-1}|_{|\xi'|=1}=\frac{c(\xi')+ic(dx_n)}{2(\xi_n-i)}.
\end{align}
Since
\begin{align*}
&\sigma_{-2}(D+c(X)D^{-1}fD)^{-1}(x_0)\nonumber\\
&=\frac{c(\xi)\sigma_{0}(D+c(X)D^{-1}fD)(x_0)c(\xi)}{|\xi|^4}+\frac{c(\xi)}{|\xi|^6}c(dx_n)
\bigg[\partial_{x_n}[c(\xi')](x_0)|\xi|^2-c(\xi)h'(0)|\xi|^2_{\partial_
M}\bigg].
\end{align*}
Moreover
\begin{align*}
&\partial_{\xi_n}\sigma_{-2}(D+c(X)D^{-1}fD)^{-1}(x_0)|_{|\xi'|=1}\nonumber\\
&=
\partial_{\xi_n}\bigg\{\frac{c(\xi)[Q(x_0)
+c(X)f]c(\xi)}{|\xi|^4}+\frac{c(\xi)}{|\xi|^6}c(dx_n)[\partial_{x_n}[c(\xi')](x_0)|\xi|^2-c(\xi)h'(0)]\bigg\}\nonumber\\
&=\partial_{\xi_n}\bigg\{\frac{[c(\xi)Q(x_0)]c(\xi)}{|\xi|^4}+\frac{c(\xi)}{|\xi|^6}c(dx_n)[\partial_{x_n}[c(\xi')](x_0)|\xi|^2-c(\xi)h'(0)]\bigg\}+\partial_{\xi_n}\bigg(\frac{c(\xi)fc(X)c(\xi)}{|\xi|^4}\bigg).\nonumber\\
\end{align*}
By computations, we have
\begin{align}\label{67}
\partial_{\xi_n}\bigg(\frac{c(\xi)c(X)fc(\xi)}{|\xi|^4}\bigg)&=-\frac{4\xi_n}{(1+\xi_n^2)^3}fc(\xi')c(X)c(\xi')
+\bigg(\frac{1}{(1+\xi_n^2)^2}-\frac{4\xi_n^2}{(1+\xi_n^2)^3}\bigg)\bigg(fc(\xi')c(X)c(dx_n)\nonumber\\
&+fc(dx_n)c(X)c(\xi')\bigg)+\bigg(\frac{2\xi_n}{(1+\xi_n^2)^2}-\frac{4\xi_n^3}{(1+\xi_n^2)^3}\bigg)fc(dx_n)c(X)c(dx_n).
\end{align}
We denote $$q_{-2}^{1}=\frac{c(\xi)Q(x_0)c(\xi)}{|\xi|^4}+\frac{c(\xi)}{|\xi|^6}c(dx_n)[\partial_{x_n}[c(\xi')](x_0)|\xi|^2-c(\xi)h'(0)],$$ then
\begin{align}\label{68}
\partial_{\xi_n}(q_{-2}^{1})&=\frac{1}{(1+\xi_n^2)^3}\bigg[(2\xi_n-2\xi_n^3)c(dx_n)Q(x_0)c(dx_n)
+(1-3\xi_n^2)c(dx_n)Q(x_0)c(\xi')\nonumber\\
&+(1-3\xi_n^2)c(\xi')Q(x_0)c(dx_n)
-4\xi_nc(\xi')Q(x_0)c(\xi')
+(3\xi_n^2-1){\partial}_{x_n}c(\xi')\nonumber\\
&-4\xi_nc(\xi')c(dx_n){\partial}_{x_n}c(\xi')
+2h'(0)c(\xi')+2h'(0)\xi_nc(dx_n)\bigg]+6\xi_nh'(0)\frac{c(\xi)c(dx_n)c(\xi)}{(1+\xi^2_n)^4}.
\end{align}
By (\ref{62}) and (\ref{67}), we have
\begin{align*}
&{\rm tr}[\pi^+_{\xi_n}\sigma_{-1}(D+c(X)D^{-1}fD)^{-1}\times
\partial_{\xi_n}\bigg(\frac{c(\xi)c(X)fc(\xi)}
{|\xi|^4}\bigg)](x_0)\nonumber\\
&=4\frac{1-3\xi_n^2+3i\xi_n-i\xi_n^3}{(\xi_n-i)^4(\xi_n+i)^3}fX_n+4\frac{i(1-3\xi_n^2)-3\xi_n+\xi_n^3}{(\xi_n-i)^4(\xi_n+i)^3}fg(X,\xi').
\end{align*}
By (\ref{62}) and (\ref{68}), we have
\begin{align*}
{\rm tr}[\pi^+_{\xi_n}\sigma_{-1}(D+c(X)D^{-1}fD)^{-1}\times
\partial_{\xi_n}(q^1_{-2})](x_0)
=\frac{3h'(0)(i\xi^2_n+\xi_n-2i)}{(\xi-i)^3(\xi+i)^3}
+\frac{12h'(0)i\xi_n}{(\xi-i)^3(\xi+i)^4}.
\end{align*}
Then
\begin{align*}
-i\Omega_3\int_{\Gamma_+}\bigg[\frac{3h'(0)(i\xi_n^2+\xi_n-2i)}
{(\xi_n-i)^3(\xi_n+i)^3}+\frac{12h'(0)i\xi_n}{(\xi_n-i)^3(\xi_n+i)^4}\bigg]d\xi_ndx'=
-\frac{9}{8}\pi h'(0)\Omega_3dx'.
\end{align*}
When $i<n,~\int_{|\xi'|=1}\xi_{i_{1}}\xi_{i_{2}}\cdots\xi_{i_{2d+1}}\sigma(\xi')=0$ and $g(X,\xi')$ has no contribution for computing {\rm case~a-V)}, we have
\begin{align*}
&-i\int_{|\xi'|=1}\int^{+\infty}_{-\infty}{\rm tr}[\pi^+_{\xi_n}\sigma_{-1}(D+c(X)D^{-1}fD)^{-1}\times
\partial_{\xi_n}\frac{c(\xi)c(X)fc(\xi)}
{|\xi|^4}](x_0)d\xi_n\sigma(\xi')dx'\nonumber\\
&=-i\int_{|\xi'|=1}\int^{+\infty}_{-\infty}4\frac{1-3\xi_n^2+3i\xi_n-i\xi_n^3}{(\xi_n-i)^4(\xi_n+i)^3}fX_nd\xi_n\sigma(\xi')dx'\nonumber\\
&=-4if\Omega_3X_n\int_{\Gamma^+}\frac{1-3\xi_n^2+3i\xi_n-i\xi_n^3}{(\xi_n-i)^4(\xi_n+i)^3}d\xi_ndx'\nonumber\\
&=-4if\Omega_3X_n\frac{2\pi i}{3!}\bigg[\frac{1-3\xi_n^2+3i\xi_n-i\xi_n^3}{(\xi_n+i)^3}\bigg]^{(3)}\bigg|_{\xi_n=i}dx'\nonumber\\
&=fX_n\pi\Omega_3dx'.
\end{align*}
Finally, we get
\begin{align*}
\Phi_5=\bigg(-\frac{9}{8} h'(0)+fX_n\bigg)\pi\Omega_3dx'.
\end{align*}
Now $\Phi$ is the sum of the {\bf case a-I)}-{\bf case a-V)}. Therefore, we get
\begin{align}
\label{76}
\Phi=\sum_{i=1}^5\Phi_i=0.
\end{align}
By Theorem \ref{thm2} and (\ref{76}), we find that the boundary term vanishes. Thus, Theorem \ref{thmb1} holds.
\end{proof}
\section{The second type pseudo-differential perturbation of the Dirac operator $D+D^{-1}c(X)D$}
\label{section:3}
In this section, we define the second type pseudo-differential perturbation of the Dirac operator $D+D^{-1}c(X)D$, and prove the Kastler-Kalau-Walze type theorem of the operator $D+D^{-1}c(X)D$ on 4-dimensional manifolds with boundary.
\subsection{${\rm Wres}(D+D^{-1}c(X)D)^{-2}$ on manifolds without boundary}
Now, we define the second type pseudo-differential perturbation of the Dirac operator, set $B=D+D^{-1}c(X)D,$ then we get
\begin{align*}
B^2=[D+D^{-1}c(X)D]^2&=D^2+c(X)D+D^{-1}c(X)D^{2}-|X|^{2}+D^{-1}c[d(|X|^2)].
\end{align*}
We need to compute the total symbol $\sigma(x,\xi)$ of $B^{-2}$ up to order -4, with $B^2$ the
following sum of terms $B^2_k$ of order $k$:
$$
B^2=(B^2)_2+(B^2)_1+(B^2)_0+(B^2)_{<0}.
$$
\begin{align}\label{3322}
\sigma^{B^2}_2(x,\xi)&=|\xi|^2;\nonumber\\
\sigma^{B^2}_1(x,\xi)&=i(\Gamma^\mu-2\sigma^\mu)\xi_\mu+ic(\xi)c(X)+ic(X)c(\xi);\nonumber\\
\sigma^{B^2}_0(x,\xi)&=-(\partial^x\sigma_\mu+\sigma^\mu\sigma_\mu-\Gamma^\mu\sigma_\mu)+\frac{1}{4}s+ic(X)\gamma^\mu\sigma_\mu-|X|^2-|\xi|^{-2}c(\xi)c(X)(\Gamma^\mu-2\sigma^\mu)\xi_\mu\nonumber\\
&+i|\xi|^{-2}c(\xi)\gamma^\mu\sigma_\mu c(\xi)c(X)+|\xi|^{-4}[c(\xi)c(d{x_\mu})\partial_\mu^x(c(\xi))|\xi|^{2}-c(\xi)\partial_\mu^x(|\xi|^2)]c(X)-[\partial_\xi^\mu(|\xi|^{-2})c(\xi)\nonumber\\
&+|\xi|^{-2}\partial_\xi^\mu(c(\xi))][\partial_\mu^x(c(X))|\xi|^2+c(X)\partial_\mu^x(|\xi|^2)].
\end{align}
Next, we compute a parametrix $B^{-2}$ of $B^2$ up to order -4 using the above
recipe: this amounts to computing the parts $\sigma^{B^{-2}}_{-k},k=2,3,4,$ in the expansion of the
full symbol $\sigma$ of $B^2$ into terms of decreasing order:
\begin{align*}
\sigma^{B^{-2}}=\sigma^{B^{-2}}_{-2}+\sigma^{B^{-2}}_{-3}+\sigma^{B^{-2}}_{-4}+terms~of~order \leq -5.
\end{align*}
Application of (\ref{1111}) with $P=B^2$ and $Q=B^{-2}$ yields in the respective orders
$0,- 1,-2$ the recurrence relations:
\begin{align}\label{67890}
\sigma^{B^{-2}}_{-2}&=(\sigma^{B^{2}}_2)^{-1};\nonumber\\
\sigma^{B^{-2}}_{-3}&=-\sigma^{B^{-2}}_{-2}[\sigma^{B^{2}}_1\sigma^{B^{-2}}_{-2}-i\partial_{\xi}^\mu\sigma^{B^{2}}_2\partial_\mu^x\sigma^{B^{-2}}_{-2}];\nonumber\\
\sigma^{B^{-2}}_{-4}&=-\sigma^{B^{-2}}_{-2}[\sigma^{B^{2}}_1\sigma^{B^{-2}}_{-3}+\sigma^{B^{2}}_0\sigma^{B^{-2}}_{-2}-i\partial_{\xi}^\mu\sigma^{B^{2}}_1\partial_\mu^x\sigma^{B^{-2}}_{-2}-i\partial_{\xi}^\mu\sigma^{B^{2}}_2\partial_\mu^x\sigma^{B^{-2}}_{-3}].
\end{align}
Put (\ref{3322}) into (\ref{67890}), we have
\begin{align}\label{9998}
\sigma^{B^{-2}}_{-2}&=|\xi|^{-2};\nonumber\\
\sigma^{B^{-2}}_{-3}&=-|\xi|^{-2}[(ic(\xi)c(X)+ic(X)c(\xi)+i(\Gamma^\mu-2\sigma^\mu)\xi_\mu)|\xi|^{-2}-i\partial^\mu_\xi(|\xi|^2)\partial_\mu^x(|\xi|^{-2})];\nonumber\\
\sigma^{B^{-2}}_{-4}&=-|\xi|^{-6}\xi_\mu\xi_\nu(\Gamma^\mu-2\sigma^\mu)(\Gamma^\nu-2\sigma^\nu)-2|\xi|^{-8}\xi^\mu\xi_\alpha\xi_\beta(\Gamma^\nu-2\sigma^\nu)\partial^x_\mu g^{\alpha\beta}+|\xi|^{-4}(\partial^{x\mu}\sigma_\mu+\sigma^\mu\sigma_\mu\nonumber\\
&-\Gamma^\mu\sigma_\mu)-\frac{1}{4}|\xi|^{-4}s-2i|\xi|^{-2}\xi^\mu\partial^x_\mu\sigma^{B^{-2}}_{-3}+|\xi|^{-6}\xi_\alpha\xi_\beta(\Gamma^\mu-2\sigma^\mu)\partial^x_\mu g^{\alpha\beta}-|\xi|^{-6}\xi_\alpha\xi_\beta g^{\mu\nu}\partial^x_{\mu\nu} g^{\alpha\beta}\nonumber\\
&+2|\xi|^{-8}\xi_\alpha\xi_\beta\xi_\gamma\xi_\delta g^{\mu\nu}\partial^x_\mu g^{\alpha\beta}\partial^x_\nu g^{\gamma\delta}-|\xi|^{-6}(c(\xi)c(X)+c(X)c(\xi))(\Gamma^\mu-2\sigma^\mu)\xi_\mu+2|\xi|^{-8}(c(\xi)\nonumber\\
&c(X)+c(X)c(\xi))\xi^\mu\xi_\alpha\xi_\beta\partial^x_\mu g^{\alpha\beta}-|\xi|^{-6}[c(\xi)c(X)+c(X)c(\xi)]^2-|\xi|^{-6}(\Gamma^\mu-2\sigma^\mu)\xi_\mu(c(\xi)c(X)\nonumber\\
&+c(X)c(\xi))+i|\xi|^{-4}c(X)\gamma^\mu\sigma_\mu-|\xi|^{-4}|X|^2-|\xi|^{-6}c(\xi)c(X)(\Gamma^\mu-2\sigma^\mu)\xi_\mu+i|\xi|^{-6}c(\xi)\gamma^\mu\sigma_\mu \nonumber\\
&c(\xi)c(X)+|\xi|^{-6}c(\xi)c(d_{x_\mu})\partial_\mu^x(c(\xi))c(X)-|\xi|^{-8}c(\xi)\partial_\mu^x(|\xi|^2)c(X)-|\xi|^{-4}[\partial_\xi^\mu(|\xi|^{-2})c(\xi)\nonumber\\
&+|\xi|^{-2}\partial_\xi^\mu(c(\xi))][\partial_\mu^x(c(X))|\xi|^2+c(X)\partial_\mu^x(|\xi|^2)].
\end{align}
Regrouping the terms and inserting:
\begin{align*}
\partial^x_\mu\sigma^{B^{-2}}_{-3}&=2i|\xi|^{-6}\xi_\nu\xi_\alpha\xi_\beta(\Gamma^\nu-2\sigma^\nu)\partial^x_\mu g^{\alpha\beta}-i|\xi|^{-4}\xi_\nu\partial^x_\mu(\Gamma^\nu-2\sigma^\nu)+6i|\xi|^{-8}\xi_\nu\xi_\alpha\xi_\beta\xi_\gamma\xi_\delta\partial^x_\mu g^{\alpha\beta}\partial^x_\nu g^{\gamma\delta}\nonumber\\
&-2i|\xi|^{-6}\xi_\alpha\xi_\gamma\xi_\delta\partial^x_\mu g^{\nu\alpha}\partial^x_\nu g^{\gamma\delta}-2i|\xi|^{-6}\xi^\nu\xi_\gamma\xi_\delta\partial^x_{\mu\nu}g^{\gamma\delta}-i\partial^x_\mu [|\xi|^{-4}(c(\xi)c(X)+c(X)c(\xi))] .
\end{align*}
We get for $\sigma^{B^{-2}}_{-4}$ the sum of terms: $N_1-N_{10}$ and the followings
\begin{align*}
R_1&=-|\xi|^{-6}[c(\xi)c(X)+c(X)c(\xi)]\xi_\mu(\Gamma^\mu-2\sigma^\nu);\nonumber\\
R_2&=-|\xi|^{-6}\xi_\mu(\Gamma^\mu-2\sigma^\nu)[c(\xi)c(X)+c(X)c(\xi)];\nonumber\\
R_3&=-|\xi|^{-6}[c(\xi)c(X)+c(X)c(\xi)]^2;\nonumber\\
R_4&=2|\xi|^{-8}[c(\xi)c(X)+c(X)c(\xi)]\xi^\mu\xi_\alpha\xi_\beta\partial^x_\mu  g^{\alpha\beta};\nonumber\\
R_5&=-2|\xi|^{-2}\xi^\mu\partial^x_\nu [|\xi|^{-2}(c(\xi)c(X)+c(X)c(\xi))];\nonumber\\
R_6&=i|\xi|^{-4}c(X)\gamma^\mu\sigma_\mu-|\xi|^{-4}|X|^2-|\xi|^{-6}c(\xi)c(X)(\Gamma^\mu-2\sigma^\mu)\xi_\mu+i|\xi|^{-6}c(\xi)\gamma^\mu\sigma_\mu c(\xi)c(X);\nonumber\\
R_7&=|\xi|^{-6}c(\xi)c(d{x_\mu})\partial_\mu^x(c(\xi))c(X)-|\xi|^{-8}c(\xi)\partial_\mu^x(|\xi|^2)c(X);\nonumber\\
R_8&=-|\xi|^{-4}[\partial_\xi^\mu(|\xi|^{-2})c(\xi)+|\xi|^{-2}\partial_\xi^\mu(c(\xi))][\partial_\mu^x(c(X))|\xi|^2+c(X)\partial_\mu^x(|\xi|^2)].
\end{align*}
By (\ref{666}), we can get the noncommutative residue of the operator $D+D^{-1}c(X)D$ on 4-dimensional manifolds without boundary. Now we need to compute
\begin{align*}
{\rm Wres}(D+D^{-1}c(X)D)^{-2}=\int_{M}\int_{|\xi|=1}{\rm tr}(\sigma^{B^{-2}}_{-4})(x_0)\sigma(\xi)dx.
\end{align*}
Likewise, we only need to compute $\int_{|\xi|=1}{\rm tr}[\sum_{i=1}^{8}R_i](x_0)\sigma(\xi).$

$\mathbf{(1):}$ By (\ref{a26})-(\ref{lll}), we obtain that the result of this term $R_1$, $R_2$ and $R_4$ disappear in normal coordinates.
$\int_{|\xi|=1}{\rm tr}(R_3)(x_0)\sigma(\xi)=-2\pi^2|X|^2{\rm tr}[{\rm \texttt{id}}]$, $\int_{|\xi|=1}{\rm tr}(R_5)(x_0)\sigma(\xi)=2\pi^2div_M(X){\rm tr}[{\rm \texttt{id}}]$.

$\mathbf{(2):}$
\begin{align*}
&{\rm tr}(R_6)(x_0)_{|\xi|=1}=-|X|^2{\rm tr}[{\rm \texttt{id}}],
\end{align*}
then
\begin{align*}
\int_{|\xi|=1}{\rm tr}(R_6)(x_0)\sigma(\xi)&=-2\pi^2|X|^2{\rm tr}[{\rm \texttt{id}}].
\end{align*}

$\mathbf{(3):}$
\begin{align*}
&{\rm tr}(R_7)_{|\xi|=1}=c(\xi)c(d{x_\mu})\partial_\mu^x(c(\xi))c(X)-c(\xi)\partial_\mu^x(|\xi|^2)c(X).
\end{align*}
Using the facts:
$\partial_\mu^x(c(\xi))(x_0)=0$ and $\partial_\mu^x(|\xi|^2)(x_0)=0,$ then in normal coordinates, the result of this term $R_7$ disappears.

$\mathbf{(4):}$
By $\partial^\mu_\xi(|\xi|^{-2})=-2|\xi|^{-4}\xi^\mu,$ $\partial^\mu_\xi(c(\xi))=c(dx_\mu)$ and $\partial^\mu_x(c(X))(x_0)=\sum_{j=1}^n\partial^\mu_x(X_j)c(\partial_{x_j})(x_0)=\sum_{j=1}^n\partial^\mu_x(X_j)c(e_j)(x_0)$, we have
\begin{align*}
{\rm tr}(R_8)(x_0)&=-[-2\xi^\mu c(\xi)+c(dx_\mu)][\sum_{j=1}^n\partial^\mu_x(X_j)c(e_j)](x_0)\nonumber\\
&=\sum_\mu[\sum_l2\xi_\mu\xi_lc(dx_l)-c(dx_\mu)][\sum_{j=1}^n\partial^\mu_x(X_j)c(dx_j)](x_0)\nonumber\\
&=[-\sum_{\mu l}2\xi_\mu\xi_l\partial^\mu_x(X_j)g^{lj}+\sum_{\mu j}\partial^\mu_x(X_j)g^{\mu l}(x_0)]{\rm tr}[{\rm \texttt{id}}],
\end{align*}
further, by $\int_{S^3}\xi_\mu\xi_l\sigma(\xi)=\frac{1}{4}\delta_{\mu l}Vol_{S^3}=\frac{1}{2}\pi^2\delta_{\mu l},$ we obtain
\begin{align*}
\int_{|\xi|=1}{\rm tr}(R_8)(x_0)\sigma(\xi)&=-\frac{1}{2}\int_{|\xi|=1}\sum_j\partial_{x_j}(X_j){\rm tr}[{\rm \texttt{id}}]\sigma(\xi)+\int_{|\xi|=1}\sum_j\partial_{x_j}(X_j){\rm tr}[{\rm \texttt{id}}]\sigma(\xi)\nonumber\\
&=\frac{1}{2}\sum_j\partial_{x_j}(X_j){\rm tr}[{\rm \texttt{id}}]Vol_{S^3}\nonumber\\
&=\pi^2div_M(X){\rm tr}[{\rm \texttt{id}}].
\end{align*}
Thus
\begin{align*}
\int_{|\xi|=1}{\rm tr}(\sigma^{B^{-2}}_{-4})\sigma(\xi)=\bigg(-4\pi^2|X|^2+3\pi^2div_M(X)+\frac{1}{12}s\bigg){\rm tr}[{\rm \texttt{id}}].
\end{align*}
Then, by (\ref{666}), we have the following result.
\begin{thm}\label{thm82} Let $M$ be a $4$-dimensional oriented
compact spin manifold without boundary, then we get the noncommutative residue of the operator $D+D^{-1}c(X)D$
\begin{align*}
&{\rm Wres}(D+D^{-1}c(X)D)^{-2}=4\int_{M}\bigg(-4\pi^2|X|^2+3\pi^2div_M(X)+\frac{1}{12}s\bigg) d{\rm Vol_{M}}.
\end{align*}
\end{thm}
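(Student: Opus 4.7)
The plan is to mimic the strategy used for Theorem \ref{thm2} in the previous section, but applied to the second-type perturbation $B=D+D^{-1}c(X)D$. First I would expand $B^2$ explicitly as
\begin{align*}
B^2 = D^2 + c(X)D + D^{-1}c(X)D^2 - |X|^2 + D^{-1}c(d|X|^2),
\end{align*}
using the identity $D^2 f - fD^2 = 2c(df)D - \Delta(f)$ style manipulations together with $Dc(X) + c(X)D = -2\nabla_X + \text{zeroth order}$ so that the leading symbol is $|\xi|^2$. The advantage of this rewriting is that although $B^2$ is not a Laplace-type operator, its homogeneous symbol decomposition $\sigma^{B^2}_2 + \sigma^{B^2}_1 + \sigma^{B^2}_0 + \cdots$ is obtained cleanly through the composition formula (\ref{1111}); the parts of order $2$ and $1$ coincide with those of $A^2$ in Section \ref{section:2}, while $\sigma^{B^2}_0$ differs by new terms arising from the presence of $D^{-1}$ on the left. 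These new contributions are precisely what will produce the $R_1,\ldots,R_8$ in the statement of (\ref{9998}).

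Next I would apply the recurrence (\ref{67890}) with $P=B^2$, $Q=B^{-2}$ to compute $\sigma^{B^{-2}}_{-2}$, $\sigma^{B^{-2}}_{-3}$ and $\sigma^{B^{-2}}_{-4}$ down to the order needed. By (\ref{666}), the residue reduces to
\begin{align*}
{\rm Wres}(B^{-2}) = \int_M \int_{|\xi|=1} {\rm tr}(\sigma^{B^{-2}}_{-4})(x_0)\,\sigma(\xi)\,dx,
\end{align*}
so the task reduces to an explicit integration over $S^3$ at a fixed point $x_0\in M$ in normal coordinates, where $\Gamma^\mu_{\alpha\beta}(x_0)=0$, $\sigma_\mu(x_0)=0$ and $\partial_\mu^x g^{\alpha\beta}(x_0)=0$. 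I would then split $\sigma^{B^{-2}}_{-4}$ into the ``geometric'' pieces $N_1,\ldots,N_{10}$ (identical to those in Section \ref{section:2}), together with the ``perturbation'' pieces $R_1,\ldots,R_8$ listed in the excerpt.

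For the geometric part, I would quote the Kastler identity
\begin{align*}
\int_{|\xi|=1} {\rm tr}\Big[\sum_{i=1}^{10} N_i\Big](x_0)\,\sigma(\xi) = -\frac{1}{12}\, s\, {\rm tr}[\mathrm{id}],
\end{align*}
which gives the $\frac{1}{12}s$ contribution (after carrying a sign through (\ref{666})). For the perturbation part, most $R_i$ involve a factor $\xi_\mu(\Gamma^\mu-2\sigma^\mu)$ or $\partial^x_\mu g^{\alpha\beta}$ that vanishes at $x_0$ in normal coordinates, so $R_1$, $R_2$, $R_4$, $R_7$ contribute nothing, exactly as recorded in the preceding calculation. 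The surviving contributions come from $R_3$ (giving $-2\pi^2|X|^2\,{\rm tr}[\mathrm{id}]$ via $(c(\xi)c(X)+c(X)c(\xi))^2 = 4\xi(X)^2$ and $\int_{S^3}\xi_j\xi_l\,\sigma(\xi)=\tfrac{\pi^2}{2}\delta_{jl}$), from $R_5$ (producing $2\pi^2 \mathrm{div}_M(X)\,{\rm tr}[\mathrm{id}]$ by differentiating through $\partial^x_\nu$ and recognizing $\sum_l e_l\langle e_l,X\rangle = \mathrm{div}_M X$), from the $-|X|^2$ piece of $R_6$ (giving another $-2\pi^2|X|^2\,{\rm tr}[\mathrm{id}]$ while the $\sigma_\mu$-terms drop in normal coordinates), and from $R_8$, which after using $\partial_\xi^\mu(|\xi|^{-2})=-2|\xi|^{-4}\xi^\mu$, $\partial_\xi^\mu c(\xi)=c(dx_\mu)$ and $\partial_\mu^x c(X)(x_0)=\sum_j\partial_\mu^x(X_j)c(e_j)(x_0)$ produces an additional $\pi^2\mathrm{div}_M(X)\,{\rm tr}[\mathrm{id}]$. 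Summing these four contributions with the geometric piece yields the coefficient $-4\pi^2|X|^2 + 3\pi^2\mathrm{div}_M(X) + \frac{1}{12}s$; integrating against $d\mathrm{Vol}_M$ and using ${\rm tr}_{S(TM)}[\mathrm{id}]=4$ gives the claimed formula.

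The main obstacle I anticipate is the bookkeeping for $\sigma^{B^2}_0$: unlike the first-type perturbation, $B^2$ contains a genuine $D^{-1}$ on the left, so its order-zero symbol requires carefully expanding $\sigma(D^{-1}c(X)D^2)$ via (\ref{1111}) through two orders of the asymptotic sum, and the resulting $R_6$, $R_7$, $R_8$ contain terms mixing $\partial_\mu^x c(X)$ with $\partial_\mu^x|\xi|^2$ and $\partial_\mu^x c(\xi)$. Getting the signs and combinatorial factors correct in these cross-terms, and verifying that all pieces that should vanish in normal coordinates really do, is the most error-prone step. Once this bookkeeping is in place, the final $S^3$-integrations reduce to elementary Clifford-trace identities and Gaussian-type moments $\int_{S^3}\xi_\mu\xi_\nu\,\sigma(\xi) = \frac{\pi^2}{2}\delta_{\mu\nu}$.
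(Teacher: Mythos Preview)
Your proposal is correct and follows essentially the same approach as the paper: expand $B^2$ as you wrote, compute $\sigma^{B^2}_2,\sigma^{B^2}_1,\sigma^{B^2}_0$ via the composition formula, run the parametrix recurrence to obtain $\sigma^{B^{-2}}_{-4}$, split into the geometric pieces $N_1,\ldots,N_{10}$ plus the perturbation pieces $R_1,\ldots,R_8$, invoke Kastler's computation for the $N_i$, and evaluate the $R_i$ in normal coordinates exactly as you describe, with the same vanishing terms and the same surviving contributions ($R_3,R_5,R_6,R_8$) yielding $-4\pi^2|X|^2+3\pi^2\mathrm{div}_M(X)+\tfrac{1}{12}s$. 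The bookkeeping concern you flag about $\sigma^{B^2}_0$ is precisely where the paper's new terms $R_6,R_7,R_8$ originate, and your description of how they are handled matches the paper's treatment.
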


\subsection{${\rm \widetilde{Wres}}[\pi^+(D+D^{-1}c(X)D)^{-1}\circ \pi^+(D+D^{-1}c(X)D)^{-1}]$  on manifolds with boundary}
Some symbols for the second type pseudo-differential perturbation of the Dirac operator are given by (\ref{3322})
and (\ref{9998}). Then we get the following lemmas
\begin{lem}\label{2lem2}Some symbols of positive order for the second type pseudo-differential perturbation of the Dirac
operator are as follows.
\begin{align*}
\sigma_1(D+D^{-1}c(X)D)&=\sigma_1(D)=ic(\xi); \nonumber\\
\sigma_0(D+D^{-1}c(X)D)&=\sigma_0(D)-c(X)+2\frac{c(\xi)\xi(X)}{|\xi|^2}=-\frac{1}{4}\sum_{i,s,t}\omega_{s,t}(e_i)c(e_i)c(e_s)c(e_t)-c(X)+2\frac{c(\xi)\xi(X)}{|\xi|^2}.
\end{align*}
\end{lem}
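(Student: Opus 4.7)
The plan is to read off the two homogeneous components of the total symbol of $B=D+D^{-1}c(X)D$ directly from the composition formula (\ref{1111}), exploiting the fact that $D^{-1}c(X)D$ is a classical pseudo-differential operator of order $0$. Since multiplication by $c(X)$ is $\xi$-independent of order $0$ and $D^{-1}$ has principal symbol of order $-1$, the composite $D^{-1}c(X)D$ has order $\leq 0$. Consequently the order-$1$ part of $B$ is inherited from $D$, i.e.\ $\sigma_1(B)=\sigma_1(D)=ic(\xi)$, which establishes the first identity.

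For the order-zero part, I would first compute $\sigma(c(X)D)$. Because $c(X)$ is $\xi$-independent, only the $\alpha=0$ term in (\ref{1111}) survives, yielding $\sigma_j(c(X)D)=c(X)\,\sigma_j(D)$; in particular $\sigma_1(c(X)D)=ic(X)c(\xi)$. Applying (\ref{1111}) a second time with $P=D^{-1}$ and $Q=c(X)D$, the only multi-index contributing to order zero is $\alpha=0$: one needs $r+l-|\alpha|=0$ with $r\leq -1$ and $l\leq 1$, which forces $|\alpha|=0$, $r=-1$, $l=1$. Hence
\begin{equation*}
\sigma_0(D^{-1}c(X)D)=\sigma_{-1}(D^{-1})\,\sigma_1(c(X)D)=\frac{ic(\xi)}{|\xi|^2}\cdot ic(X)c(\xi)=-\frac{c(\xi)c(X)c(\xi)}{|\xi|^2}.
\end{equation*}

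The key simplification is the Clifford identity $c(\xi)c(X)+c(X)c(\xi)=-2\xi(X)$ together with $c(\xi)^2=-|\xi|^2$. Multiplying the first relation on the right by $c(\xi)$ gives $c(\xi)c(X)c(\xi)=|\xi|^2 c(X)-2\xi(X)c(\xi)$, so that
\begin{equation*}
\sigma_0(D^{-1}c(X)D)=-c(X)+\frac{2c(\xi)\xi(X)}{|\xi|^2}.
\end{equation*}
Adding $\sigma_0(D)=-\tfrac{1}{4}\sum_{i,s,t}\omega_{s,t}(e_i)c(e_i)c(e_s)c(e_t)$, as already recorded in the parallel Lemma \ref{lem2}, produces the second formula in the statement. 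No genuine obstacle is expected: the only delicate points are the bookkeeping of orders that feed into the composition (which, crucially, truncates the asymptotic expansion to a single term at order zero) and the correct application of the Clifford relation to reduce the triple product $c(\xi)c(X)c(\xi)$ to the required form.
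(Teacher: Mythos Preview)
Your proposal is correct and follows exactly the approach implicit in the paper: the paper states the lemma without detailed proof, merely noting it follows from the symbol calculus already set up, and your argument supplies precisely the composition-formula computation and Clifford simplification that the paper leaves to the reader.
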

Then by the composition formula of pseudo-differential operators, we have
\begin{lem}\label{2lem3}Some symbols of negative order for the second type pseudo-differential perturbation of the Dirac
operator are as follows.
\begin{align*}
\sigma_{-1}({D+D^{-1}c(X)D})^{-1}&=\frac{ic(\xi)}{|\xi|^2};\nonumber\\
\sigma_{-2}({D+D^{-1}c(X)D})^{-1}&=\frac{c(\xi)\sigma_{0}({D+D^{-1}c(X)D})c(\xi)}{|\xi|^4}+\frac{c(\xi)}{|\xi|^6}\sum_jc(dx_j)
\Big[\partial_{x_j}(c(\xi))|\xi|^2-c(\xi)\partial_{x_j}(|\xi|^2)\Big].
\end{align*}
\end{lem}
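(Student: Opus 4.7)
The plan is to apply the asymptotic composition formula \eqref{1111} to the identity $B\circ B^{-1}=I$, where $B=D+D^{-1}c(X)D$, and solve recursively for the two leading homogeneous components of $\sigma(B^{-1})$. Since $B$ is a classical pseudo-differential operator of order $1$, the parametrix expansion begins at order $-1$, and the recurrence at each step is triangular in the usual way.

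By Lemma \ref{2lem2}, the principal symbol of $B$ coincides with that of $D$, namely $\sigma_1(B)=ic(\xi)$. Matching the order-zero part of $B\circ B^{-1}=I$ then reduces to $ic(\xi)\,\sigma_{-1}(B^{-1})=I$, and using the Clifford identity $c(\xi)^2=-|\xi|^2$ one immediately obtains $\sigma_{-1}(B^{-1})=ic(\xi)/|\xi|^2$, which moreover satisfies $\sigma_{-1}(B^{-1})\,\sigma_1(B)=I$. This proves the first formula; it also shows that $\sigma_{-1}(B^{-1})$ agrees with $\sigma_{-1}(D^{-1})$, reflecting the fact that the perturbation $D^{-1}c(X)D$ has order $0$ and so does not affect the leading symbol of the parametrix.

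For $\sigma_{-2}(B^{-1})$, I would collect the order-$(-1)$ terms in the composition identity, obtaining
$$\sigma_1(B)\,\sigma_{-2}(B^{-1})+\sigma_0(B)\,\sigma_{-1}(B^{-1})-i\sum_j\partial_{\xi_j}\sigma_1(B)\,\partial_{x_j}\sigma_{-1}(B^{-1})=0.$$
Left-multiplying by $\sigma_{-1}(B^{-1})=ic(\xi)/|\xi|^2$ (a left inverse of $\sigma_1(B)$) and inserting $\partial_{\xi_j}\sigma_1(B)=ic(dx_j)$ gives
$$\sigma_{-2}(B^{-1})=\frac{c(\xi)\,\sigma_0(B)\,c(\xi)}{|\xi|^4}+\frac{c(\xi)}{|\xi|^2}\sum_j c(dx_j)\,\partial_{x_j}\!\left(\frac{c(\xi)}{|\xi|^2}\right),$$
where the four factors of $i$ generated by the product $\sigma_{-1}(B^{-1})\cdot\partial_{\xi_j}\sigma_1(B)\cdot\partial_{x_j}\sigma_{-1}(B^{-1})$ cancel in pairs. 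Applying the quotient rule to $\partial_{x_j}(c(\xi)/|\xi|^2)$ and placing the result over the common denominator $|\xi|^6$ yields the stated closed form.

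The derivation is formally identical to that of Lemma \ref{lem3} in the first-type case; the only difference is that the subprincipal symbol $\sigma_0(B)$ recorded in Lemma \ref{2lem2}, which contains the new contributions $-c(X)+2c(\xi)\xi(X)/|\xi|^2$, replaces $\sigma_0(D+c(X)D^{-1}fD)$. There is no conceptual obstacle, and the main bookkeeping care concerns the factors of $i$ and the order of the non-commuting Clifford products when moving $\sigma_{-1}(B^{-1})$ past $\partial_{\xi_j}\sigma_1(B)$; every simplification ultimately reduces to the identity $c(\xi)^2=-|\xi|^2$.
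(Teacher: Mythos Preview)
Your argument is correct and follows the same route the paper indicates: the paper simply states that Lemma~\ref{2lem3} follows ``by the composition formula of pseudo-differential operators,'' and your recursion from $B\circ B^{-1}=I$ via \eqref{1111} is exactly that computation spelled out in detail. The sign and $i$-bookkeeping you describe checks out against the Clifford identity $c(\xi)^2=-|\xi|^2$, and your observation that the derivation is formally identical to Lemma~\ref{lem3} with $\sigma_0(B)$ in place of $\sigma_0(D+c(X)D^{-1}fD)$ matches how the paper treats the two cases in parallel.
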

Now, by (\ref{b7}) and (\ref{b8}), we obtain lower dimensional volumes of spin manifolds with boundary for the operator $D+D^{-1}c(X)D$
\begin{align}
\label{b1P4}
&\widetilde{{\rm Wres}}[\pi^+(D+D^{-1}c(X)D)^{-1}\circ \pi^+(D+D^{-1}c(X)D)^{-1}]\nonumber\\
&=\int_M\int_{|\xi|=1}{\rm
tr}_{S(TM)\bigotimes\mathbb{C}}[\sigma_{-4}(D+D^{-1}c(X)D)^{-2}]\sigma(\xi)dx+\int_{\partial M}\widetilde{\Phi},
\end{align}
where
\begin{align}
\label{b1P5}
\widetilde{\Phi} &=\int_{|\xi'|=1}\int^{+\infty}_{-\infty}\sum^{\infty}_{j, k=0}\sum\frac{(-i)^{|\alpha|+j+k+1}}{\alpha!(j+k+1)!}
\times{\rm
tr}_{S(TM)\bigotimes\mathbb{C}}[\partial^j_{x_n}\partial^\alpha_{\xi'}\partial^k_{\xi_n}\sigma^+_{r}(D+D^{-1}c(X)D)^{-1}(x',0,\xi',\xi_n)
\nonumber\\
&\times\partial^\alpha_{x'}\partial^{j+1}_{\xi_n}\partial^k_{x_n}\sigma_{l}(D+D^{-1}c(X)D)^{-1}(x',0,\xi',\xi_n)]d\xi_n\sigma(\xi')dx',
\end{align}
and the sum is taken over $r+l-k-j-|\alpha|=-3,~~r\leq -1,~~l\leq-1$.

Similar to the case of the first type pseudo-differential perturbation of the Dirac operator,  we have obtained the interior term of $\widetilde{{\rm Wres}}[\pi^+(D+D^{-1}c(X)D)^{-1}\circ\pi^+(D+D^{-1}c(X)D)^{-1}]$ in Theorem \ref{thm82}. After computing $\int_{\partial M} \widetilde{\Phi}$, we get the following theorem.
\begin{thm}\label{thmbP1}
Let $M$ be a $4$-dimensional oriented
compact spin manifold without boundary, then we get the Kastler-Kalau-Walze type theorem of the operator $D+D^{-1}c(X)D$
\begin{align*}
&\widetilde{{\rm Wres}}[\pi^+(D+D^{-1}c(X)D)^{-1}\circ\pi^+(D+D^{-1}c(X)D)^{-1}]\nonumber\\
&=4\int_{M}\bigg(-4\pi^2|X|^2+3\pi^2div_M(X)+\frac{1}{12}s\bigg) d{\rm Vol_{M}}.
\end{align*}
In particular, the boundary term vanishes.
\end{thm}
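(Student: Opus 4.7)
The plan is to mirror the five-case decomposition carried out in the proof of Theorem \ref{thmb1}. By the splitting formula (\ref{b1P4}), the interior contribution is precisely the integrand computed in Theorem \ref{thm82}, so the entire task reduces to verifying $\int_{\partial M}\widetilde{\Phi}=0$. Using the constraint $r+l-k-j-|\alpha|=-3$ with $r,l\leq -1$, the sum in (\ref{b1P5}) reduces to the same five cases \textbf{a-I)}--\textbf{a-V)}. The key structural observation from Lemma \ref{2lem3} is that $\sigma_{-1}(D+D^{-1}c(X)D)^{-1}=ic(\xi)/|\xi|^2$ is identical to $\sigma_{-1}(D+c(X)D^{-1}fD)^{-1}$; consequently cases \textbf{a-I)}, \textbf{a-II)}, \textbf{a-III)}, which only involve $\sigma_{-1}$, reproduce verbatim the earlier computations, yielding $\widetilde{\Phi}_1=0$, $\widetilde{\Phi}_2=-\tfrac{3}{8}\pi h'(0)\Omega_3 dx'$, $\widetilde{\Phi}_3=\tfrac{3}{8}\pi h'(0)\Omega_3 dx'$.

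For \textbf{a-IV)} and \textbf{a-V)}, the non-trivial new input is that the zero-order symbol entering $\sigma_{-2}$ now reads $\sigma_0(D)-c(X)+2c(\xi)\xi(X)/|\xi|^2$ rather than $\sigma_0(D)+c(X)f$. I would split $\sigma_{-2}(D+D^{-1}c(X)D)^{-1}$ at $x_0$ as $q_{-2}^1+q_{-2}^{\mathrm{pert}}$, where $q_{-2}^1$ is the Levi-Civita/$h'(0)$ piece identical to the one appearing in the proof of Theorem \ref{thmb1}, and
\[
q_{-2}^{\mathrm{pert}}=\frac{c(\xi)\bigl(-c(X)+2c(\xi)\xi(X)/|\xi|^2\bigr)c(\xi)}{|\xi|^4}.
\]
The $q_{-2}^1$ part contributes $+\tfrac{9}{8}\pi h'(0)\Omega_3 dx'$ to $\widetilde{\Phi}_4$ and $-\tfrac{9}{8}\pi h'(0)\Omega_3 dx'$ to $\widetilde{\Phi}_5$ exactly as before, so these, together with $\widetilde{\Phi}_2+\widetilde{\Phi}_3=0$, cancel amongst themselves.

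It remains to show that the perturbation part $q_{-2}^{\mathrm{pert}}$ gives zero net contribution to $\widetilde{\Phi}_4+\widetilde{\Phi}_5$. Noting $c(\xi)c(\xi)=-|\xi|^2$, one simplifies $q_{-2}^{\mathrm{pert}}=\bigl(-c(\xi)c(X)c(\xi)-2|\xi|^{-2}\xi(X)|\xi|^2 c(\xi)\bigr)/|\xi|^4$, reducing everything to Clifford-symbol expressions in $c(\xi')$, $c(dx_n)$ and the constant $c(X^T)+X_n c(dx_n)$. Then $\pi^+_{\xi_n}$ and $\partial_{\xi_n}$ are computed by residues at $\xi_n=\pm i$ on $\Gamma^+$, exactly as in \textbf{a-IV)}--\textbf{a-V)} of the first type case. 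Using the standard trace identities ${\rm tr}[c(\xi')c(X)c(\xi')c(dx_n)]=-4X_n$, ${\rm tr}[c(dx_n)c(X)c(dx_n)c(dx_n)]=4X_n$, etc., together with the odd-power vanishing $\int_{|\xi'|=1}\xi_{i_1}\cdots\xi_{i_{2d+1}}\sigma(\xi')=0$ (which kills every $g(X,\xi')$-type term), each remaining integrand is a rational function of $\xi_n$ whose $\Gamma^+$-residue is a constant multiple of $\pi X_n\Omega_3 dx'$.

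The main obstacle I foresee is the bookkeeping of the perturbation contribution, in particular the $2c(\xi)\xi(X)/|\xi|^2$ piece, which inserts a symbol of order $-1$ inside $\sigma_0$ and therefore produces $\sigma_{-2}$-terms with higher-order poles in $\xi_n$. The residue computations of $\pi^+_{\xi_n}$ acting on $\xi(X)\,c(\xi)/|\xi|^4=\bigl(X_n\xi_n+\langle X^T,\xi'\rangle\bigr)c(\xi)/(1+\xi_n^2)^2$ and the pairing with $\partial_{\xi_n}\sigma_{-1}^{-1}$ (or its symmetric counterpart in \textbf{a-V)}) must be tracked carefully. I expect that the $X_n$-proportional residues from \textbf{a-IV)} and \textbf{a-V)} cancel by the same mechanism as in Theorem \ref{thmb1}: the $\pi^+_{\xi_n}$ and $\partial_{\xi_n}\pi^+_{\xi_n}$ operations applied to conjugate symbols produce contributions differing by an overall sign, so that $\widetilde{\Phi}_4+\widetilde{\Phi}_5=0$. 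Combined with $\widetilde{\Phi}_1+\widetilde{\Phi}_2+\widetilde{\Phi}_3=0$, this establishes $\int_{\partial M}\widetilde{\Phi}=0$ and hence the theorem.
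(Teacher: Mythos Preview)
Your proposal is correct and follows the same five-case decomposition as the paper: cases I--III coincide verbatim with the first-type computation since $\sigma_{-1}$ is unchanged, and in cases IV--V the paper likewise splits $\sigma_{-2}$ into the $q_{-2}^1$ piece (contributing $\pm\tfrac{9}{8}\pi h'(0)\Omega_3\,dx'$) plus the perturbation terms coming from $-c(X)+2c(\xi)\xi(X)/|\xi|^2$. The only difference is that the paper verifies the cancellation of the perturbation contributions by explicit residue computation---obtaining $\widetilde{\Phi}_4=\bigl(\tfrac{9}{8}h'(0)+\tfrac{1}{2}X_n\bigr)\pi\Omega_3\,dx'$ and $\widetilde{\Phi}_5=-\bigl(\tfrac{9}{8}h'(0)+\tfrac{1}{2}X_n\bigr)\pi\Omega_3\,dx'$---rather than by the ``conjugate symbol'' heuristic you invoke, so that final step still needs to be carried out concretely.
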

\begin{proof}
\indent When $n=4$, the sum is taken over $
r+l-k-j-|\alpha|=-3,~~r\leq -1,~~l\leq-1,$ then we have the following five cases:
~\\
\noindent  {\bf case b-I)}~$r=-1,~l=-1,~k=j=0,~|\alpha|=1$.\\
\noindent By (\ref{b1P5}), we get
\begin{align*}
\widetilde{\Phi}_1&=-\int_{|\xi'|=1}\int^{+\infty}_{-\infty}\sum_{|\alpha|=1}
 {\rm tr}[\partial^\alpha_{\xi'}\pi^+_{\xi_n}\sigma_{-1}(D+D^{-1}c(X)D)^{-1}\\
 &\times
 \partial^\alpha_{x'}\partial_{\xi_n}\sigma_{-1}(D+D^{-1}c(X)D)^{-1}](x_0)d\xi_n\sigma(\xi')dx'.
\end{align*}
 \noindent  {\bf case b-II)}~$r=-1,~l=-1,~k=|\alpha|=0,~j=1$.\\
\noindent By (\ref{b1P5}), we get
\begin{align*}
\widetilde{\Phi}_2&=-\frac{1}{2}\int_{|\xi'|=1}\int^{+\infty}_{-\infty} {\rm
tr} [\partial_{x_n}\pi^+_{\xi_n}\sigma_{-1}(D+D^{-1}c(X)D)^{-1}\\
&\times
\partial_{\xi_n}^2\sigma_{-1}(D+D^{-1}c(X)D)^{-1}](x_0)d\xi_n\sigma(\xi')dx'.
\end{align*}
\noindent  {\bf case b-III)}~$r=-1,~l=-1,~j=|\alpha|=0,~k=1$.\\
\noindent By (\ref{b1P5}), we get
\begin{align*}
\widetilde{\Phi}_3&=-\frac{1}{2}\int_{|\xi'|=1}\int^{+\infty}_{-\infty}
{\rm tr} [\partial_{\xi_n}\pi^+_{\xi_n}\sigma_{-1}({D+D^{-1}c(X)D})^{-1}\\
&\times
\partial_{\xi_n}\partial_{x_n}\sigma_{-1}({D+D^{-1}c(X)D})^{-1}](x_0)d\xi_n\sigma(\xi')dx'.
\end{align*}
By Lemma \ref{lem3} and Lemma \ref{2lem3}, we have $\sigma_{-1}({D+D^{-1}c(X)D})^{-1}=\sigma_{-1}({D+c(X)D^{-1}fD})^{-1}$.
Then
 $$\widetilde{\Phi}_i=\Phi_i,~~~i=1,2,3.$$
\noindent  {\bf case b-IV)}~$r=-2,~l=-1,~k=j=|\alpha|=0$.\\
\noindent By (\ref{b1P5}), we get
\begin{align*}
\widetilde{\Phi}_4&=-i\int_{|\xi'|=1}\int^{+\infty}_{-\infty}{\rm tr} [\pi^+_{\xi_n}\sigma_{-2}({D+D^{-1}c(X)D})^{-1}\\
&\times
\partial_{\xi_n}\sigma_{-1}({D+D^{-1}c(X)D})^{-1}](x_0)d\xi_n\sigma(\xi')dx'.
\end{align*}
 By Lemma \ref{lem3}, we have
\begin{align*}
&\sigma_{-2}({D+D^{-1}c(X)D})^{-1}(x_0)\nonumber\\
&=\frac{c(\xi)\sigma_{0}(D+D^{-1}c(X)D)(x_0)c(\xi)}{|\xi|^4}+\frac{c(\xi)}{|\xi|^6}c(dx_n)
[\partial_{x_n}[c(\xi')](x_0)|\xi|^2-c(\xi)h'(0)|\xi|^2_{\partial
M}].
\end{align*}
Moreover, by (\ref{b3}) and (\ref{b4}), we obtain
\begin{align*}
&\pi^+_{\xi_n}\sigma_{-2}(D+D^{-1}c(X)D)^{-1}|_{|\xi'|=1}\nonumber\\
&=\pi^+_{\xi_n}\Big[\frac{c(\xi)Q(x_0)c(\xi)}{(1+\xi_n^2)^2}\Big]-\pi^+_{\xi_n}
\Big[\frac{c(\xi)c(X)c(\xi)}{(1+\xi_n^2)^2}\Big]
+\pi^+_{\xi_n}\Big[\frac{c(\xi)c(dx_n)\partial_{x_n}[c(\xi')](x_0)}{(1+\xi_n^2)^2}-h'(0)\frac{c(\xi)c(dx_n)c(\xi)}{(1+\xi_n^{2})^3}\Big]\nonumber\\
&+\pi^+_{\xi_n}\Big[-2\frac{\xi(X)c(\xi)}{(1+\xi_n^2)^2}\Big]\nonumber\\
&:=C_1-C_2-\frac{1}{f}C_3+C_4,
\end{align*}
where
\begin{align}\label{5p3}
C_4&=\frac{2+i\xi_n}{2(\xi_n-i)^2}\sum_{j=1}^{n-1}\xi_jX_jc(\xi')+\frac{i}{2(\xi_n-i)^2}X_nc(\xi')+\frac{i}{2(\xi_n-i)^2}\sum_{j=1}^{n-1}\xi_jX_jc(dx_n)+\frac{i\xi_n}{2(\xi_n-i)^2}X_nc(dx_n).
\end{align}
Since
\begin{align}\label{5p0}
\partial_{\xi_n}\sigma_{-1}(D+D^{-1}c(X)D)^{-1}=i\left[\frac{c(dx_n)}{1+\xi_n^2}-\frac{2\xi_nc(\xi')+2\xi_n^2c(dx_n)}{(1+\xi_n^2)^2}\right].
\end{align}
Then by (\ref{5p3}) and (\ref{5p0}), we have
\begin{align*}{\rm tr }[C_4\times\partial_{\xi_n}\sigma_{-1}(D+D^{-1}c(X)D)^{-1}]|_{|\xi'|=1}=
\frac{-4\xi_n^2+8i\xi_n+2}{(\xi_n-i)^4(\xi_n+i)^2}\sum_{j=1}^{n-1}\xi_jX_j+\frac{-2\xi_n^3-2\xi_n}{(\xi_n-i)^4(\xi_n+i)^2}X_n.
\end{align*}
When $i<n,~\int_{|\xi'|=1}\xi_{i_{1}}\xi_{i_{2}}\cdots\xi_{i_{2d+1}}\sigma(\xi')=0$,
so $\sum_{j=1}^{n-1}\xi_jX_j$ has no contribution for computing {\rm case~b-IV)}, we obtain
\begin{align*}
&-i\int_{|\xi'|=1}\int^{+\infty}_{-\infty}{\rm tr} [C_4\times
\partial_{\xi_n}\sigma_{-1}(D+D^{-1}c(X)D)^{-1}](x_0)d\xi_n\sigma(\xi')dx'\nonumber\\
&=-i\int_{|\xi'|=1}\int^{+\infty}_{-\infty}\frac{-2\xi_n^3-2\xi_n}{(\xi_n-i)^4(\xi_n+i)^2}X_nd\xi_n\sigma(\xi')dx'\nonumber\\
&=2iX_n\Omega_3\int_{\Gamma^+}\frac{\xi_n^3+\xi_n}{(\xi_n-i)^4(\xi_n+i)^2}d\xi_ndx'\nonumber\\
&=2iX_n\Omega_3\frac{2\pi i}{3!}\bigg[\frac{\xi_n^3+\xi_n}{(\xi_n+i)^2}\bigg]^{(3)}\bigg|_{\xi_n=i}dx'\nonumber\\
&=-\frac{1}{2}X_n\Omega_3dx'.
\end{align*}
Therefore, we get
\begin{align*}
\Phi_4=\bigg(\frac{9}{8} h'(0)+\frac{1}{2}X_n\bigg)\pi\Omega_3dx'.
\end{align*}
\noindent {\bf  case b-V)}~$r=-1,~l=-2,~k=j=|\alpha|=0$.\\
By (\ref{b1P5}), we get
\begin{align*}
\widetilde{\Phi}_5=-i\int_{|\xi'|=1}\int^{+\infty}_{-\infty}{\rm tr} [\pi^+_{\xi_n}\sigma_{-1}(D+D^{-1}c(X)D)^{-1}\times
\partial_{\xi_n}\sigma_{-2}(D+D^{-1}c(X)D)^{-1}](x_0)d\xi_n\sigma(\xi')dx'.
\end{align*}
By (\ref{b3}), (\ref{b4}) and Lemma \ref{lem3}, we have
\begin{align}\label{6p2}
\pi^+_{\xi_n}\sigma_{-1}(D+D^{-1}c(X)D)^{-1}|_{|\xi'|=1}=\frac{c(\xi')+ic(dx_n)}{2(\xi_n-i)}.
\end{align}
Since
\begin{align*}
&\sigma_{-2}(D+D^{-1}c(X)D)^{-1}(x_0)\nonumber\\
&=\frac{c(\xi)\sigma_{0}(D+D^{-1}c(X)D)(x_0)c(\xi)}{|\xi|^4}+\frac{c(\xi)}{|\xi|^6}c(dx_n)
\bigg[\partial_{x_n}[c(\xi')](x_0)|\xi|^2-c(\xi)h'(0)|\xi|^2_{\partial_
M}\bigg].
\end{align*}
Moreover
\begin{align*}
&\partial_{\xi_n}\sigma_{-2}(D+D^{-1}c(X)D)^{-1}(x_0)|_{|\xi'|=1}\nonumber\\
&=
\partial_{\xi_n}\bigg\{\frac{c(\xi)[Q(x_0)
+c(X)]c(\xi)}{|\xi|^4}+\frac{c(\xi)}{|\xi|^6}c(dx_n)[\partial_{x_n}[c(\xi')](x_0)|\xi|^2-c(\xi)h'(0)]\bigg\}\nonumber\\
&=\partial_{\xi_n}\bigg\{\frac{c(\xi)Q(x_0)c(\xi)}{|\xi|^4}+\frac{c(\xi)}{|\xi|^6}c(dx_n)[\partial_{x_n}[c(\xi')](x_0)|\xi|^2-c(\xi)h'(0)]\bigg\}-\partial_{\xi_n}\bigg(\frac{c(\xi)c(X)c(\xi)}{|\xi|^4}\bigg)\nonumber\\
&+\partial_{\xi_n}\bigg(-2\frac{\xi(X)c(\xi)}{|\xi|^4}\bigg).
\end{align*}
By computations, we have
\begin{align}\label{6p7}
&\partial_{\xi_n}\bigg(-2\frac{\xi(X)c(\xi)}{|\xi|^4}\bigg)\nonumber\\
&=-\frac{8\xi_n}{(1+\xi_n^2)^3}\sum_{j=1}^{n-1}\xi_jX_jc(\xi')+\frac{6\xi_n^2-2}{(1+\xi_n^2)^3}X_nc(\xi')+\frac{6\xi_n^2-2}{(1+\xi_n^2)^3}\sum_{j=1}^{n-1}\xi_jX_jc(dx_n)+\frac{4\xi_n^3-4\xi_n}{(1+\xi_n^2)^3}X_nc(dx_n).
\end{align}
By (\ref{6p2}) and (\ref{6p7}), we have
\begin{align}\label{7P3}
&{\rm tr}\bigg[\pi^+_{\xi_n}\sigma_{-1}(D+D^{-1}c(X)D)^{-1}\times
\partial_{\xi_n}\bigg(-2\frac{\xi(X)c(\xi)}{|\xi|^4}\bigg)\bigg](x_0)\nonumber\\
&=-4\frac{3i\xi_n^2+4\xi_n-i}{(\xi_n-i)^4(\xi_n+i)^3}\sum_{j=1}^{n-1}\xi_jX_j-4\frac{2i\xi_n^3+3\xi_n^2-2i\xi_n-1}{(\xi_n-i)^4(\xi_n+i)^3}X_n.
\end{align}
When $i<n,~\int_{|\xi'|=1}\xi_{i_{1}}\xi_{i_{2}}\cdots\xi_{i_{2d+1}}\sigma(\xi')=0$ and $\sum_{j=1}^{n-1}\xi_jX_j$ has no contribution for computing {\rm case~b-V)}, we have
\begin{align*}
&-i\int_{|\xi'|=1}\int^{+\infty}_{-\infty}{\rm tr}[\pi^+_{\xi_n}\sigma_{-1}(D+D^{-1}c(X)D)^{-1}\times
\partial_{\xi_n}\bigg(-2\frac{\xi(X)c(\xi)}
{|\xi|^4}\bigg)](x_0)d\xi_n\sigma(\xi')dx'\nonumber\\
&=-i\int_{|\xi'|=1}\int^{+\infty}_{-\infty}-4\frac{2i\xi_n^3+3\xi_n^2-2i\xi_n-1}{(\xi_n-i)^4(\xi_n+i)^3}X_nd\xi_n\sigma(\xi')dx'\nonumber\\
&=4iX_n\Omega_3\int_{\Gamma^+}\frac{2i\xi_n^3+3\xi_n^2-2i\xi_n-1}{(\xi_n-i)^4(\xi_n+i)^3}d\xi_ndx'\nonumber\\
&=4iX_n\Omega_3\frac{2\pi i}{3!}\bigg[\frac{2i\xi_n^3+3\xi_n^2-2i\xi_n-1}{(\xi_n+i)^3}\bigg]^{(3)}\bigg|_{\xi_n=i}dx'\nonumber\\
&=\frac{1}{2}X_n\Omega_3dx'.
\end{align*}
Then
\begin{align*}
\widetilde{\Phi}_5=\bigg(-\frac{9}{8} h'(0)-\frac{1}{2}X_n\bigg)\pi\Omega_3dx'.
\end{align*}
Now $\widetilde{\Phi}$ is the sum of the {\bf  (case b-I)}-{\bf  (case b-V)}. Therefore, we get
\begin{align}\label{766}
\widetilde{\Phi}=\sum_{i=1}^5\widetilde{\Phi}_i=0.
\end{align}
By Theorem \ref{thm82} and (\ref{766}), we find that the boundary term vanishes. Thus, Theorem \ref{thmbP1} holds.
\end{proof}
\section{Examples}
\label{section:4}
Let $(\mathcal{A},\mathcal{H},D)$ be an $n$-summable unital spectral triple, where $\mathcal{A}$ is a noncommutative algebra with involution, acting in the Hilbert space $\mathcal{H}$ while $D$ is a Dirac operator, which is self-adjoint
operator with compact resolvent and such that
$[D,a]$ is bounded $\forall a \in \mathcal{A}$. We assume that there exists a generalised algebra of
 pseudo-differential operators, which contains $\mathcal{A},D,$ $D^l$ for $l\in \mathbb{Z}$ and there exists a tracial state $\mathcal{W}$  on it, called a noncommutative residue. Moreover, we assume that the
 noncommutative residue identically vanishes on $TD^{-k}$ for any $k>2m$ and a zero-order
 operator $T$. Let $a,b\in \mathcal{A},$ we define $\widetilde{D}=D+[D,a]D^{-1}bD$ and $\widehat{D}=D+D^{-1}[D,a]D,$ we call that $\widetilde{D}$ and $\widehat{D}$ are the pseudo-differential perturbations of $D,$ which generalize the pseudo-differential perturbations of the Dirac operator in Theorem \ref{thm1111} and Theorem \ref{thm2222}.
\begin{exam}{\bf Hodge-Dirac triple}\\
\indent Start with a purely classical example of a Hodge-Dirac spectral triple: $(C^\infty(M),d+\delta,\Gamma(\wedge^*T^*M))$, where $M$ is an oriented closed Riemannian manifold and a grading $\Gamma$, the de Rham derivative $d$ is an elliptic differential operator on $C^\infty(M;\wedge^*T^*M)$ and where $\delta$ is
 an operator adjoint to $d$.
\end{exam}
\begin{exam}{\bf Almost commutative $M\times \mathbb{Z}_2$}\\
\indent We assume that $M$ is a closed spin manifold and $(C^\infty(M),D,\mathcal{H})$ is an even spectral triple of dimension $n=2m$ with the standard Dirac
 operator $D$ and a grading $\Gamma$. We consider the usual double-sheet spectral triple, $(C^\infty(M)\otimes \mathbb{C}^2,\widetilde{D},L^2(M,S(TM)))\oplus L^2(M,S(TM)))$ with the Dirac operator,
 \[
\widetilde{D}=\begin{bmatrix}
 D&0\\
0&0 \\
\end{bmatrix}+\Gamma\otimes \begin{bmatrix}
0&\Phi\\
\Phi^*&0 \\
\end{bmatrix}=
\begin{bmatrix}
 D&\Gamma\Phi\\
\Gamma\Phi^*&D \\
\end{bmatrix},
\]
 where $\Phi\in \mathbb{C}.$ Then\[
\begin{bmatrix}
    \widetilde{D},&
    \begin{bmatrix}  f_1&0\\
0& f_2 \end{bmatrix}
\end{bmatrix}
=
\begin{bmatrix}
c(df_1)&\Phi(f_2-f_1)\Gamma\\
    \Phi^*(f_1-f_2)\Gamma& c(df_2)
\end{bmatrix},\]
\end{exam}
where $f_1,f_2\in C^\infty(M).$
\begin{exam}{\bf Conformally rescaled noncommutative tori}\\
\indent Consider the spectral triple on the noncommutative $n$-torus, $(\mathcal{A},\mathcal{H},D_k)$, where $\mathcal{A}=C^\infty(\mathbb{T}^n_\theta)$ is the $\delta_j$-smooth subalgebra for
 the standard derivations $\delta_j$, $j=1,\cdot\cdot\cdot,n$. Next, $\mathcal{H}=V(m)\otimes L^2(\mathbb{T}^n_\theta,\tau)$, where $V(m)$ is the spinor space and $\tau$ is the standard
 trace which annihilates $\delta_j$. Also, $D_k=kDk$ is the conformal rescaling of the standard (flat) Dirac operator $D=\sum_j\gamma^j\delta_j$ with $\gamma^j$ being the usual gamma matrices and the conformal factor $k>0$ is from $\mathcal{A}$. We use the calculus of the pseudo-differential operators over  noncommutative tori. Then we have the analogue of the Wodzicki residue.
\end{exam}
In the three examples presented above, we can consider the corresponding pseudo-differential operators $\widetilde{D},\widehat{D}$. We will compute the similar residue of $\widetilde{D}^{-2}$ and $\widehat{D}^{-2}$ in above 4-dimensional examples and get simmilar Theorem \ref{thm1111} and Theorem \ref{thm2222} in the above examples in the future.
\section*{Declarations}
\textbf{Ethics approval and consent to participate:} Not applicable.

\textbf{Consent for publication:} Not applicable.

\textbf{Availability of data and materials:} The authors confrm that the data supporting the fndings of this study are available within the article.

\textbf{Competing interests:} The authors declare no competing interests.

\textbf{Funding:} This research was funded by National Natural Science Foundation of China: No.11771070.

\textbf{Author Contributions:} All authors contributed to the study conception and design. Material preparation,
data collection and analysis were performed by TW and YW. The frst draft of the manuscript was written
by TW and all authors commented on previous versions of the manuscript. All authors read and approved
the final manuscript.

\section*{Acknowledgements}
This work was supported by NSFC. 11771070 and the Fundamental Research Funds for the Central Universities N2405015.
 The authors thank the referee for his (or her) careful reading and helpful comments.

\section*{References}

\end{document}